\let\pa\partial
\let\na\nabla
\let\eps\varepsilon
\newcommand{\R}{\mathbb{R}}
\newcommand{\N}{\mathbb{N}}
\newcommand{\C}{{\mathbb C}}  
\newcommand{\diver}{\textnormal{div}}
\newcommand{\tr}{\operatorname{tr}}
\newtheorem{theorem}{Theorem}
\newtheorem{proposition}[theorem]{Proposition}
\newtheorem{lemma}[theorem]{Lemma}
\theoremstyle{remark}
\newtheorem{remark}[theorem]{Remark}
\pgfplotsset{compat=newest}
\pgfplotsset{plot coordinates/math parser=false}
\pgfplotsset{compat=newest}
\pgfplotsset{plot coordinates/math parser=false}
\newlength\spdenslinwidth
\newlength\denslinwidth
\newlength\artifactwidth
\newlength\poispotwidth
\newlength\sppoispotwidth
\newlength\smalldevwidth
\newlength\entropyqlogwidth
\begin{document}
\title[Bounded weak solutions to a matrix drift-diffusion model]{Bounded weak
solutions to \\
a matrix drift-diffusion model for spin-coherent \\
electron transport in semiconductors}

\author[A. J\"ungel]{Ansgar J\"ungel}
\address{Institute for Analysis and Scientific Computing, Vienna University of  
	Technology, Wiedner Hauptstra\ss e 8--10, 1040 Wien, Austria}
\email{juengel@tuwien.ac.at} 

\author[C. Negulescu]{Claudia Negulescu}
\address{Institut de Math\'ematiques de Toulouse (IMT), Universit\'e Paul Sabatier, 
118 route de Narbonne, 31062 Toulouse cedex, France}
\email{claudia.negulescu@math.univ-toulouse.fr}

\author[P. Shpartko]{Polina Shpartko}
\address{Institute for Analysis and Scientific Computing, Vienna University of  
	Technology, Wiedner Hauptstra\ss e 8--10, 1040 Wien, Austria}
\email{polina.shpartko@tuwien.ac.at}

\date{\today}

\thanks{The authors acknowledge partial support from  
the Austrian-French Project Amad\'ee of the Austrian Exchange Service (\"OAD).
The first and last authors have been partially supported by 
the Austrian Science Fund (FWF), grants P20214, P22108, I395, and W1245.
The authors thank Stefan Possanner for helpful advices.
} 

\begin{abstract}
The global-in-time existence and uniqueness
of bounded weak solutions to a spinorial matrix 
drift-diffusion model for semiconductors is proved. 
Developing the electron density matrix in
the Pauli basis, the coefficients (charge density and spin-vector density) satisfy a
parabolic $4\times 4$ cross-diffusion system.
The key idea of the existence proof is to work with different variables: 
the spin-up and spin-down densities as well as
the parallel and perpendicular components of the spin-vector density with
respect to the magnetization. 
In these variables, the diffusion matrix becomes diagonal.
The proofs of the $L^\infty$ estimates are based on Stampacchia truncation 
as well as Moser- and Alikakos-type iteration arguments.
The monotonicity of the entropy (or free energy) is also proved.
Numerical experiments in one space dimension using a finite-volume discretization
indicate that the entropy decays exponentially fast to the equilibrium state.
\end{abstract}

% \paragraph{Keywords:}  
\keywords{Spinor drift-diffusion equations, semiconductors, bounded weak solutions,
Stampacchia truncation, Moser iteration, Alikakos iteration.}  
 
% \paragraph{AMS classification:}  
\subjclass[2000]{35K51, 82D37.}  

\maketitle

%%%%%%%%%%%%%%%%%%%%%%%%%%%%%%%%%%%%%%%%%%%%%%%%%%%%%%%%%%%%%%%%%%%%%%%%%%%%%%

\section{Introduction}

The aim of this work is to analyze a spinorial matrix drift-diffusion model. 
The evolution of the (Hermitian) electron density matrix
$N\in\C^{2\times 2}$ and the current density matrix $J\in\C^{2\times 2}$ 
is governed by the matrix equations
\begin{align}
  & \pa_t N + \diver J + i\gamma[N,\vec m\cdot\vec\sigma]
  = \frac{1}{\tau}\left(\frac12\mbox{tr}(N)\sigma_0-N\right), \label{1.N} \\
  & J = -DP^{-1/2}(\na N+N\na V)P^{-1/2}, \label{1.J}
\end{align}
where $[A,B]=AB-BA$ is the commutator for matrices $A$ and $B$.
The scaled physical parameters are the strength of the pseudo-exchange field
$\gamma>0$, the (normalized) direction of the magnetization 
$\vec m=(m_1,m_2,m_3)\in\R^3$,
the spin-flip relaxation time $\tau>0$, and the space-dependent
diffusion coefficient $D=D(x)\in(0,\infty)$. 
In the analytic part of this paper, we assume that the magnetization vector
$\vec m$ is constant.
Furthermore, $\vec\sigma=(\sigma_1,\sigma_2,\sigma_3)$ is the triple of the 
Pauli matrices (see \cite[Formula (1)]{PoNe11}), $\sigma_0$ is the 
identity matrix in $\C^{2\times 2}$,
$\mbox{tr}(N)$ denotes the trace of the matrix $N$, and
$P=\sigma_0+p\vec m\cdot\vec\sigma$, where $p=p(x)\in[0,1)$ represents
the spin polarization of the scattering rates. 
The product $\vec m\cdot\vec\sigma$ is defined as the matrix 
$m_1\sigma_1+m_2\sigma_2+m_3\sigma_3$.
System \eqref{1.N}-\eqref{1.J} is solved in the bounded cylinder
$\Omega\times[0,T)\subset\R^3\times[0,\infty)$, 
supplemented with the boundary and initial conditions
\begin{equation}\label{1.Nbic}
  N=\frac12 n_D\sigma_0\quad\mbox{on }\pa\Omega,\ t>0, 
  \quad N(0)=N^0\quad\mbox{in }\Omega.
\end{equation}
The electric potential $V$ is given by the Poisson equation
\begin{equation}\label{1.V}
  -\lambda_D^2\Delta V = \mbox{tr}(N)-C(x)\quad\mbox{in }\Omega, \quad
  V=V_D\quad\mbox{on }\pa\Omega,
\end{equation}
where $\lambda_D>0$ is the scaled Debye length and $C(x)$ the doping profile
\cite{Jue09}.

Equations \eqref{1.N}-\eqref{1.J} describe the time evolution of the
density matrix of the electrons, coupling the charge and spin degrees of freedom.
The coupling is linear in the polarization $p$ of the scattering states.
The commutator $[N,\vec m\cdot\vec\sigma]$ in \eqref{1.N}
models the precession of the spin polarization on the macroscopic level. 
Furthermore, the right-hand side in \eqref{1.N} describes the relaxation of
the spin density to an equilibrium density due to so-called spin-flip processes.

The above model was derived in \cite{PoNe11} from a
matrix Boltzmann equation involving the precession of the spin polarization
in the diffusion limit. In this derivation, 
the scattering operator is assumed to consist of a (dominant)
symmetric collision operator from the Stone model and a spin-flip 
operator with the relaxation time $\tau>0$. Generally, $D$ is a diffusion
matrix in $\R^{3\times 3}$. However, under the assumption that the 
scattering rate in the Stone model is smooth and invariant under isometric
transformations, Proposition 1 in \cite{Pou91} shows that $D$ is a multiple
of the identity matrix with a positive factor which is identified with 
the positive number $D$.

\subsection{State of the art}

In the mathematical literature, various spinorial diffusion models 
have been investigated. 
El Hajj \cite{ElH10} derived from a linear spinor Boltzmann equation 
a two-component drift-diffusion model assuming
strong spin-orbit coupling (yielding \eqref{1.nplus}-\eqref{1.nminus} below).
Two-component models are well known in the physical community
\cite[Formulas (II.39)-(II.40)]{FMESZ07}. They are parabolic equations which are
only weakly coupled through spin-flip interaction terms.
The existence, uniqueness, and boundedness of weak solutions to
such models (for spin-polarized electrons and holes) 
was proved by Glitzky in two space dimensions \cite{Gli08}. 
In three space dimensions, the well-posedness of the stationary system
was shown in \cite{GaGl10}.

El Hajj derived in \cite{ElH10} also a spin-vector drift-diffusion model assuming 
that the spin-orbit coupling is moderate compared to the mean free path. 
Mathematically, the model consists of 
matrix-valued linear parabolic equations which are
strongly coupled. The strong coupling usually makes the analysis very difficult since
standard tools like maximum principles and regularity theory generally do not apply.
In \cite{ElH10}, the scattering rates are supposed to be scalar quantities.
Possanner and Negulescu \cite{PoNe11} assumed that the scattering rates are 
positive definite Hermitian matrices, which yields spin-dependent mean free paths
and hence more general model equations, which are analyzed in this paper.

Spinorial semiconductor models beyond drift-diffusion have been also investigated. 
For instance,
Ben Abdallah and El Hajj \cite{BeEl09} derived spinorial energy-transport 
and spin-vector drift-diffusion models with Fermi-Dirac statistics.
Quantum drift-diffusion equations for a lateral superlattice with Rashba spin-orbit 
interaction were deduced by Bonilla et al.\ \cite{BaMe10,BBA08} and numerically
solved by Possanner et al.\ \cite{PBMN13}.

Because of the strong coupling of the model equations and the quadratic-type
nonlinearity of the drift term, there are no analytical
results available for spin-vector drift-diffusion systems like
\eqref{1.N}-\eqref{1.V}, at least up to our knowledge. In this work, we
provide for the first time analytical results for the above spin-vector model.

\subsection{Various formulations}

The key idea of the analysis is to work with different variables.

\medskip\noindent{\em 1.\ Charge and spin-vector densities:}
It turns out that the analysis of \eqref{1.N}-\eqref{1.J} is easier, when
we develop $N$ and $J$ in the Pauli basis $(\sigma_0,\ldots,\sigma_3)$
via $N=\frac12 n_0\sigma_0 + \vec n\cdot\vec\sigma$ and 
$J=\frac12 j_0\sigma_0 + \vec j\cdot\vec\sigma$, where $n_0$ is the electron
charge density and $\vec n$ the spin-vector density. 
These quantities are real since $N$ is Hermitian and positive semi-definite.
Setting $\vec n=(n_1,n_2,n_3)$ and $\vec j=(j_1,j_2,j_3)$,
system \eqref{1.N}-\eqref{1.J} can be written equivalently as \cite[Remark 1]{PoNe11}
\begin{align}
  & \pa_t n_0 - \diver\left(\frac{D}{\eta^2}(J_0 - 2p\vec J\cdot\vec m)\right)
	= 0, \label{1.n0} \\
  & \pa_t n_k - \diver\left(\frac{D}{\eta^2}\Big(\eta J_k 
  + (1-\eta)(\vec J\cdot\vec m)m_k - \frac{p}{2}J_0m_k\Big)\right) 
	- 2\gamma(\vec n\times\vec m)_k = -\frac{n_k}{\tau}, \label{1.vecn} \\
  & J_0 = \na n_0+n_0\na V, \quad 
  \vec J = (J_1,J_2,J_3) = \na\vec n + \vec n\na V, \quad x\in\Omega,\ t>0, 
	\label{1.J0}
\end{align}
where $k=1,2,3$, $\eta=\sqrt{1-p^2}$ (which is generally space dependent), 
and $(j_0,\vec j)$ and $(J_0,\vec J)$ are related by
$$
  j_0 = -\frac{D}{\eta^2}(J_0 - 2p\vec J\cdot\vec m), \quad
  j_k = -\frac{D}{\eta^2}\left(\eta J_k + (1-\eta)
  (\vec J\cdot\vec m)m_k - \frac{p}{2}J_0m_k\right).
$$
The boundary and initial data is given by
\begin{equation}\label{1.bic}
  n_0=n_D, \quad \vec n=0 \quad\mbox{on }\pa\Omega,\ t>0, 
  \quad n_0(0)=n_0^0, \quad \vec n(0)=\vec{n}^0\quad\mbox{in }\Omega,
\end{equation}
where $N^0=\frac12 n_0^0\sigma_0+\vec{n}^0\cdot\vec\sigma$.
System \eqref{1.V}-\eqref{1.vecn} is strongly coupled due to the cross-diffusion
terms in $n_0$ and $\vec n$ with linear diffusion coefficients. 
Note that any solution $(n_0,\vec n)$ to \eqref{1.n0}-\eqref{1.bic}
defines a solution $N$ to \eqref{1.N}-\eqref{1.Nbic}.

\medskip\noindent{\em 2.\ Spin-up and spin-down densities:}
The coupling becomes weaker by working in the 
spin-up and spin-down densities $n_\pm=\frac12 n_0\pm \vec n\cdot\vec m$. 
Indeed, multiplying \eqref{1.vecn}
by $\vec m$, some terms cancel, and combining the resulting expression 
with \eqref{1.n0}, we find that $(n_+,n_-)$ solves
\begin{align}
  & \pa_t n_+ - \diver\big(D(1+p)(\na n_+ + n_+\na V)\big)
  = \frac{1}{2\tau}(n_- - n_+), \label{1.nplus} \\
  & \pa_t n_- - \diver\big(D(1-p)(\na n_- + n_-\na V)\big)
  = \frac{1}{2\tau}(n_+ - n_-), \quad x\in\Omega,\ t>0, \label{1.nminus} \\
  & n_+ = n_- = \frac12 n_D\quad\mbox{on }\pa\Omega,\ t>0, \quad
  n_\pm(0) = \frac12n_0^0\pm\vec n^0\cdot\vec m\quad\mbox{in }\Omega. 
	\label{nplus.bic}
\end{align}
This formulation is only possible if the magnetization $\vec m$ is constant.
Its advantage is that the above system 
is only coupled in the source terms (and through the electric potential)
such that we can apply a maximum principle. 
Note, however, that it is {\em not} equivalent to \eqref{1.n0}-\eqref{1.J0} since
we are losing the information on the complete spin-vector density $\vec n$.
Thus, this model contains less information than the full model
\eqref{1.n0}-\eqref{1.J0}, but it is easier to analyze mathematically.
In fact, drift-diffusion equations similar to \eqref{1.nplus}-\eqref{1.nminus}
have been thoroughly investigated in, e.g., \cite{GaGr86}.

\medskip\noindent{\em 3.\ Parallel and perpendicular densities:}
For the proof of the boundedness of $\vec n$, we employ a third formulation,
the decomposition in the parallel and perpendipular components of $\vec n$ with
respect to $\vec m$. For this, let $\vec n_\parallel = (\vec n\cdot\vec m)\vec m$
and $\vec n_\perp=\vec n-(\vec n\cdot\vec m)\vec m$. Then an elementary
computation, using that $\vec m$ is constant, 
shows that $(\vec n_\parallel,\vec n_\perp)$ solves
\begin{align}
  & \pa_t\vec n_\parallel - \diver\left(\frac{D}{\eta^2}\Big((\vec J\cdot\vec m)\vec m
	- \frac{p}{2}J_0\vec m\Big)\right) = -\frac{\vec n_\parallel}{\tau}, 
	\label{npara} \\
	& \pa_t \vec n_\perp - \diver\left(\frac{D}{\eta^2}(
  \na\vec n_\perp + \vec n_\perp\na V)\right) - 2\gamma(\vec n_\perp\times\vec m) 
	= - \frac{\vec n_\perp}{\tau}. \label{nperp}
\end{align}
The second equation depends on $\vec n_\perp$ only, which makes possible the
application of a maximum principle.

\subsection{Main results}

Our first result is the global-in-time existence of weak solutions to
\eqref{1.N}-\eqref{1.V} (or equivalently, \eqref{1.V}-\eqref{1.bic})
under the assumption that the diffusion coefficient $D$ and the spin polarization
$p$ are constant. We introduce the space
$$
  W^{1,2}(0,T;H_0^1,L^2)
  = H^1(0,T;H^{-1}(\Omega))\cap L^2(0,T;H_0^1(\Omega)).
$$
Recall that $\vec m$ is assumed to be a constant vector.

\begin{theorem}[Existence of bounded weak solutions I]\label{thm.ex}
Let $T>0$ and let $\Omega\subset\R^3$ be a bounded domain with $\pa\Omega\in C^{1,1}$.
Furthermore, let $\lambda_D$, $\gamma$, $D>0$, $0\le p<1$,
and $\vec m\in\R^3$ with $|\vec m|=1$.
The data satisfies $C\in L^\infty(\Omega)$ and
\begin{align*}
  & 0\le n_D\in H^1(\Omega)\cap L^\infty(\Omega), \quad
  V_D\in W^{2,q_0}(\Omega), \quad q_0>3, \\
  & n_0^0,\, \vec{n}^0\cdot\vec m\in L^\infty(\Omega), \quad 
  \frac12 n_0^0\pm\vec{n}^0\cdot\vec m\ge 0.
\end{align*}
Then there exists a unique solution $(N,V)$ to \eqref{1.N}-\eqref{1.V} such that
$N=\frac12 n_0\sigma_0+\vec{n}\cdot\vec\sigma$ satisfies
\begin{align*}
  & n_0,\, n_k\in W^{1,2}(0,T;H_0^1,L^2), \quad
  V\in L^\infty(0,\infty;W^{2,q_0}(\Omega)), \quad q_0>3, \\
  & 0\le \frac12 n_0\pm\vec n\cdot\vec m\in L^\infty(0,\infty;L^\infty(\Omega)), 
	\quad k=1,2,3.
\end{align*}
In particular, $n_0$ and $\vec n\cdot\vec m$ are bounded uniformly in $t>0$. 
If additionally
$|\vec n^0|\in L^\infty(\Omega)$, then $|\vec n|\in L^\infty(0,T;L^\infty(\Omega))$.
\end{theorem}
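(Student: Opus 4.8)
The plan is to build the solution from the auxiliary formulations introduced above and then reassemble it. First I would solve the spin-up/spin-down system \eqref{1.nplus}--\eqref{nplus.bic} coupled with the Poisson equation \eqref{1.V}, obtaining $n_+,n_-,V$ from which I read off $n_0:=n_++n_-$ and $\vec n\cdot\vec m:=\tfrac12(n_+-n_-)$. Then, with $V$ frozen, I would solve the linear perpendicular equation \eqref{nperp} for $\vec n_\perp$ and set $\vec n:=\tfrac12(n_+-n_-)\vec m+\vec n_\perp$. A direct computation — essentially reading the derivations of \eqref{1.nplus}--\eqref{nplus.bic} and \eqref{npara}--\eqref{nperp} backwards, using that $\vec m$ is constant with $|\vec m|=1$ — then shows that $(n_0,\vec n,V)$ solves \eqref{1.n0}--\eqref{1.J0}, \eqref{1.V}, \eqref{1.bic}, and hence, as observed above, that $N=\tfrac12 n_0\sigma_0+\vec n\cdot\vec\sigma$ together with $V$ solves \eqref{1.N}--\eqref{1.V}.

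For the first step I would homogenize the boundary data by subtracting an $H^1(\Omega)\cap L^\infty(\Omega)$ extension of $\tfrac12 n_D$ and then run a Schauder (or Leray--Schauder) fixed-point argument: given $\bar n_0$, solve \eqref{1.V} for $V$, where elliptic regularity for $\pa\Omega\in C^{1,1}$ gives $V\in W^{2,q_0}(\Omega)$ and, since $q_0>3$, the embedding $W^{2,q_0}\hookrightarrow W^{1,\infty}$ gives $\na V\in L^\infty$ (uniformly in $t$ when $\bar n_0$ is bounded uniformly in $t$); given $V$, solve the weakly coupled linear parabolic system \eqref{1.nplus}--\eqref{nplus.bic} for $(n_+,n_-)$ by the Galerkin method; and set $\Phi(\bar n_0):=n_++n_-$. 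Testing by $n_\pm$ — the drift $n_\pm\na V$ absorbed by Young's inequality since $\na V\in L^\infty$ — yields the a priori bounds defining the invariant set, while the gain of a time derivative in $L^2(0,T;H^{-1}(\Omega))$ together with the Aubin--Lions lemma gives compactness of $\Phi$; continuity is routine. (An implicit Euler time discretization is an equally viable route.) This produces a weak solution $(n_+,n_-,V)$ with the regularity claimed in the theorem.

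The technical heart is the $L^\infty$ bound. Nonnegativity and the box bound $0\le n_\pm\le M$, with $M:=\max\{\tfrac12\|n_D\|_{L^\infty},\,\|\tfrac12 n_0^0\pm\vec n^0\cdot\vec m\|_{L^\infty}\}$, come from Stampacchia truncation applied to the pair $(n_+,n_-)$ simultaneously: testing by the truncations $(n_\pm-M)^+$ and $\min(n_\pm,0)$, the reaction terms $\tfrac1{2\tau}(n_\mp-n_\pm)$ contribute nonpositively by quasi-monotonicity; the drift is controlled using $\na V\in L^\infty$ and the Poisson equation to rewrite $\Delta V$ through $n_0=n_++n_-$, which on the truncation sets is dominated by the truncated functions; and the ensuing cubic terms are absorbed by Gagliardo--Nirenberg and a Gronwall/ODE-uniqueness argument (the truncations vanish at $t=0$). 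To make the bound uniform in $t$ I would run a Moser/Alikakos iteration: test by $n_\pm^{2q-1}$, estimate the drift and reaction terms, set up a recursion for $\|n_\pm(t)\|_{L^{q}}$, and let $q\to\infty$. Then $n_0=n_++n_-$ and $\vec n\cdot\vec m=\tfrac12(n_+-n_-)$ are bounded uniformly in $t$, $0\le\tfrac12 n_0\pm\vec n\cdot\vec m=n_\pm$, and the auxiliary truncation of $n_0$ used to close the fixed-point step becomes inactive.

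It remains to treat $\vec n_\perp$ and uniqueness. With $V$ frozen, \eqref{nperp} is a linear parabolic system with zero boundary data; existence of $\vec n_\perp\in W^{1,2}(0,T;H_0^1,L^2)$ is standard, and dotting \eqref{nperp} by $\vec m$ shows $\vec n_\perp\cdot\vec m\equiv0$. If $|\vec n^0|\in L^\infty(\Omega)$, then since $\vec n_\perp\cdot(\vec n_\perp\times\vec m)=0$ the precession term drops out of the estimate for $|\vec n_\perp|^2$ (as does $-|\vec n_\perp|^2/\tau\le0$), and an Alikakos/Moser iteration on $\|\vec n_\perp(t)\|_{L^{q}(\Omega)}$, $q\to\infty$, yields $|\vec n_\perp|\in L^\infty(0,T;L^\infty(\Omega))$; together with the bound on $\vec n\cdot\vec m$ this bounds $|\vec n|$, and $N=\tfrac12 n_0\sigma_0+\vec n\cdot\vec\sigma$ lies in the claimed spaces. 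For uniqueness, any solution of \eqref{1.N}--\eqref{1.V} yields, via the Pauli decomposition and the changes of variables, solutions of \eqref{1.nplus}--\eqref{1.nminus} and \eqref{nperp}; subtracting two solutions, an $L^2$ energy estimate for the differences — the bilinear drift terms controlled by the $L^\infty$ bounds and $\|\na(V_1-V_2)\|_{L^2(\Omega)}\lesssim\|n_0^1-n_0^2\|_{H^{-1}(\Omega)}$ from \eqref{1.V} — combined with Gronwall's lemma forces $n_\pm$ and $\vec n_\perp$, hence $(n_0,\vec n)$ and $N$, and then $V$, to coincide. The decisive difficulty throughout is exactly this $L^\infty$ theory: because the electrostatic drift contributes, after integration by parts, a zeroth-order term whose coefficient $\Delta V$ lies only in $L^{q_0}(\Omega)$, no pointwise maximum principle is available and all $L^\infty$ bounds must be extracted by Stampacchia truncation combined with the Moser/Alikakos iteration, with constants tracked carefully enough to remain uniform in $t$ and independent of the truncation used in the construction.
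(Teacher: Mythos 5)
Your construction is sound but follows a genuinely different route from the paper. The paper attacks the full $4\times4$ cross-diffusion system \eqref{ex.w0}--\eqref{ex.nj} directly by Leray--Schauder, and its key technical point is the coercivity estimate \eqref{1.I}/\eqref{ex.coer}: the diffusion form is positive definite only in the variables $(n_0,\vec n\cdot\vec m)$, yet still yields gradient control on all of $\vec n$; the $(n_+,n_-)$ and $\vec n_\perp$ formulations are then used only a posteriori, for the $L^\infty$ bounds. You instead observe that \eqref{1.n0}--\eqref{1.J0} decomposes \emph{exactly} (for constant $\vec m$) into the weakly coupled pair \eqref{1.nplus}--\eqref{1.nminus} plus the decoupled perpendicular equation \eqref{nperp}, solve the subsystems separately, and reassemble. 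This buys you a simpler existence argument -- you never need the mixed coercivity computation, only standard theory for a Gajewski--Gr\"oger--type two-species system and a linear parabolic system with bounded drift -- at the price of having to verify the reverse implication (that solutions of the two subsystems recombine into a solution of the original system), which the paper only states in the forward direction; since the projection onto $\vec m$ and its orthogonal complement is a constant-coefficient linear isomorphism, this verification is routine. Your uniqueness argument via the subsystems is, if anything, cleaner than the paper's Step 4.

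Two points need repair. First, your truncation level $M$ must also dominate $\sup_\Omega C$ (as in the paper's definition of $M$): after integrating by parts the term $-DM\int_0^t\int_\Omega\na V\cdot\na[n_\pm-M]^+$ and inserting the Poisson equation, one is left with $-\frac{DM(1\pm p)}{\lambda_D^2}\int_0^t\int_\Omega([n_0]-C)[n_\pm-M]^+$, which is only \emph{linear} in the truncation; if its sign is not controlled by $[n_0]-C\ge0$ on $\{n_\pm\ge M\}$, the resulting differential inequality $Y'\le K_1Y+K_2Y^{1/2}$, $Y(0)=0$, does not force $Y\equiv0$, so the uniform-in-$t$ upper bound would not close. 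Second, in the fixed-point step the input density is only an $L^2$ function, so the right-hand side of \eqref{1.V} must be truncated (as in \eqref{ex.V}) before elliptic regularity can deliver $V\in W^{2,q_0}$ and $\na V\in L^\infty$; you allude to removing this truncation at the end, but it should be built into the map from the start.
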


For simplicity, the boundary data is assumed to be independent of time.
The general situation can also be treated but is more technical; 
see, e.g., \cite{ZaJu13}. The proof of the theorem is based on the Leray-Schauder
fixed-point theorem. 
The key idea is to employ the variables $(n_0,\vec n\cdot\vec m)$ for the 
ellipticity argument. More precisely, consider
the main part of the differential operator in its weak formulation
(Equation \eqref{1.n0} is divided by four),
\begin{align*}
  I &= \frac{D}{\eta^2}\int_\Omega\Big(\frac14\na n_0\cdot\na\phi_0
	- \frac{p}{2}\na(\vec n\cdot\vec m)\cdot\na\phi_0 \\
	&\phantom{xx}{}+ \eta\na\vec n:\na\vec\phi + (1-\eta)\na(\vec n\cdot\vec m)\cdot
	\na(\vec\phi\cdot\vec m) - \frac{p}{2}\na n_0\cdot\na(\vec\phi\cdot\vec m)\Big)dx,
\end{align*}
where $(\phi_0,\vec\phi)$ is some test function. Then, choosing 
$\phi_0=n_0$, $\vec\phi=\vec n$ and using $\eta(1-\eta/2)\|\na\vec n\|^2
\ge\eta(1-\eta/2)|\na\vec n\cdot\vec m|^2$, the above integral
can be estimated by
\begin{align}
   I &= \frac{D}{\eta^2}\int_\Omega\Big(\frac14|\na n_0|^2
	- p\na(\vec n\cdot\vec m)\cdot\na n_0 \label{1.I} \\
	&\phantom{xx}{}+ \eta\Big(1-\frac{\eta}{2}\Big)\|\na \vec n\|^2
	+ \frac{\eta^2}{2}\|\na \vec n\|^2 + (1-\eta)|\na(\vec n\cdot\vec m)|^2\Big)dx
	\nonumber \\
	&\ge \frac{D}{\eta^2}\int_\Omega
	\begin{pmatrix} \na n_0 \\ \na(\vec n\cdot\vec m) \end{pmatrix}^\top
	\begin{pmatrix}
	1/4 & -p/2 \\ -p/2 & 1-\eta^2/2 \end{pmatrix}
	\begin{pmatrix} \na n_0 \\ \na(\vec n\cdot\vec m) \end{pmatrix}dx 
	+ \frac{D}{2}\int_\Omega\|\na \vec n\|^2 dx. \nonumber
\end{align}
The $2\times 2$ matrix on the right-hand side is positive definite (see the discussion
after \eqref{ex.coer}), which allows us to apply the Lax-Milgram lemma.
Although the matrix is positive definite in the variables $(n_0,\vec n\cdot\vec m)$
only, we achieve gradient estimates also for $\vec n$. This is the key estimate.
Note that the assumption that $\vec m$ is constant is crucial here. 

The positivity and boundedness of $n_0$ is proved by applying a Stampacchia 
truncation argument to system \eqref{1.nplus}-\eqref{1.nminus} in the variables 
$n_\pm=\frac12 n_0\pm\vec n\cdot\vec m$. The boundedness of $\vec n$ does not
follow from this argument. The idea is to prove the boundedness of
$\vec n_\perp=\vec n-(\vec n\cdot\vec m)\vec m$, since it satisfies
the decoupled drift-diffusion equation \eqref{nperp}.
Then, because of $\vec n\cdot\vec m\in L^\infty$, we infer that $|\vec n|\in L^\infty$.
A standard Stampacchia truncation method
cannot be employed here since the term $\vec n\times\vec m$ mixes the components
of $\vec n$. Therefore, we use a Moser-type iteration method, i.e., we derive 
$L^q$ estimates for $\vec n_\perp$ uniform in $q<\infty$ and pass to the 
limit $q\to\infty$.

For nonconstant diffusion coefficients $D(x)$ and spin polarizations $p(x)$,
we are able to prove the existence of solutions with given electric potential
only. The reason is that our proof is based on a truncation in the Poisson equation
(see Section \ref{sec.ex}) and not in the drift term as was done in, e.g.\
\cite{GaGr86}. This truncation yields $W^{1,\infty}$ regularity for the
potential, but we lose the monotonicity of the nonlocal quadratic
drift term. We assume that there exists $\delta_0>0$ such that
\begin{equation}\label{hypo.Dp}
  D, p\in L^{\infty}(\Omega), \quad D(x)\ge \delta_0>0, \ 0\le p(x)\le 1-\delta_0
	\quad\mbox{for }x\in\Omega.
\end{equation}

\begin{theorem}[Existence of bounded weak solutions II]\label{thm.ex2}
Let $|\na V|\in L^\infty(0,\infty;L^\infty(\Omega))$ be given and let the
assumptions of Theorem \ref{thm.ex} on the parameters and data hold with the
exception that $D(x)$ and $p(x)$ satisfy \eqref{hypo.Dp}.
Then there exists a unique solution to \eqref{1.N}-\eqref{1.Nbic}
such that $N=\frac12 n_0\sigma_0+\vec n\cdot\vec\sigma$ satisfies for any $T>0$,
$n_0$, $n_k\in W^{1,2}(0,T;H_0^1,L^2)$ ($k=1,2,3$). Furthermore,
$$
	0\le \frac12 n_0\pm\vec n\cdot\vec m\in L^\infty(0,\infty;L^\infty(\Omega)).
$$
If additionally $|\vec n^0|\in L^\infty(\Omega)$, then $|\vec n|\in
L^\infty(0,T;L^\infty(\Omega))$.
\end{theorem}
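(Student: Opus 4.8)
The plan is to prove, in order, (i) existence and uniqueness of a weak solution $(n_0,\vec n)$ with the stated regularity, (ii) the uniform-in-time bounds $0\le\frac12 n_0\pm\vec n\cdot\vec m\in L^\infty$, and (iii) under the extra hypothesis, the bound on $|\vec n|$. Since the potential is now prescribed, with $|\na V|\in L^\infty(0,\infty;L^\infty(\Omega))$ (hence $V\in L^\infty$ on the bounded domain), system \eqref{1.n0}-\eqref{1.J0} is \emph{linear} in $(n_0,\vec n)$ and no fixed-point argument is needed. After subtracting a time-independent $H^1$-extension of $(n_D,0)$, so as to obtain homogeneous boundary data and a right-hand side in $L^2(0,T;H^{-1}(\Omega))$, the essential point is coercivity of the principal part: the pointwise estimates behind \eqref{1.I} remain valid with the factor $D(x)/\eta(x)^2$ kept under the integral, and by \eqref{hypo.Dp} one has $D/\eta^2\ge\delta_0$ while the $2\times 2$ matrix in \eqref{1.I} has determinant $\eta^2/8\ge\delta_0/8$ and is therefore uniformly positive definite; hence the bilinear form controls $\|\na n_0\|_{L^2}^2+\|\na\vec n\|_{L^2}^2$. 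The drift terms $n_0\na V$, $\vec n\na V$ are absorbed by Young's inequality at the price of lower-order $L^2$ terms (a G{\aa}rding estimate), the precession term $\vec n\times\vec m$ is skew and cancels in the energy identity, and $-n_k/\tau$ is dissipative. The standard theory for linear parabolic systems (Galerkin approximation plus the Aubin--Lions lemma) then gives a unique solution with $n_0,n_k\in W^{1,2}(0,T;H_0^1,L^2)$; uniqueness also follows directly by applying the energy estimate to the difference of two solutions and Gronwall's lemma.

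For (ii), the solution from Step 1 defines, by the combinations of the excerpt, $n_\pm=\frac12 n_0\pm\vec n\cdot\vec m$, and — since $\vec m$ is constant — these solve the weakly coupled system \eqref{1.nplus}-\eqref{1.nminus}, whose coefficients $D(1\pm p)$ are uniformly elliptic by \eqref{hypo.Dp}, with data $\tfrac12 n_D\ge 0$ and $\tfrac12 n_0^0\pm\vec n^0\cdot\vec m\ge 0$. Here I would carry out a Stampacchia truncation in the reweighted unknowns $u_\pm:=e^V n_\pm$, so that the drift $\na n_\pm+n_\pm\na V=e^{-V}\na u_\pm$ is fully absorbed and no spurious constant source arises, testing with $(u_\pm-K)_+ e^{-V}$ for large $K$ (and with the analogous truncation at $0$ for the lower bound). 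Since the reaction matrix $\frac{1}{2\tau}\begin{pmatrix}-1&1\\1&-1\end{pmatrix}$ is cooperative with vanishing row sums, summing the two estimates gives the reaction term the favourable sign, the remaining drift-type term is controlled through $\|\na V\|_\infty$, and Gronwall's lemma with vanishing initial value forces the truncated quantities to vanish for all $t>0$; thus $0\le n_\pm\le M$ uniformly in time, $M$ depending only on $\|n_D\|_\infty$, $\|n_\pm(0)\|_\infty$ and $\|V\|_\infty$. In particular $\vec n\cdot\vec m=\frac12(n_+-n_-)\in L^\infty(0,\infty;L^\infty(\Omega))$.

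For (iii), assuming $|\vec n^0|\in L^\infty(\Omega)$, I would bound $|\vec n|$ through the perpendicular part $\vec n_\perp=\vec n-(\vec n\cdot\vec m)\vec m$, which satisfies the \emph{decoupled} equation \eqref{nperp}, with uniformly elliptic diffusion $D/\eta^2$, homogeneous boundary data (as $\vec n=0$ on $\pa\Omega$) and initial datum bounded by $|\vec n^0|$. A plain Stampacchia truncation is unavailable because $\vec n_\perp\times\vec m$ couples the components, so I would run a Moser iteration: test \eqref{nperp} with $|\vec n_\perp|^{q-2}\vec n_\perp$, use $(\vec n_\perp\times\vec m)\cdot\vec n_\perp=0$ and $\bigl|\na|\vec n_\perp|\bigr|\le|\na\vec n_\perp|$ to extract from the diffusion a term comparable to $\tfrac{q-1}{q^2}\|\na(|\vec n_\perp|^{q/2})\|_{L^2}^2$, absorb the drift by Young's inequality at the cost of a factor polynomial in $q$, and drop the dissipative relaxation term. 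Combining with the Gagliardo--Nirenberg--Sobolev inequality in $\R^3$ and iterating $q=2\kappa^k$, $k\to\infty$, the $q$-dependent constants grow only polynomially, the recursion closes, and one obtains $\|\vec n_\perp\|_{L^\infty(0,T;L^\infty(\Omega))}\le C$ with $C$ depending on $\|\vec n^0\|_\infty$, the space-time $L^2$-bound of $\vec n_\perp$ from Step 1, $\|\na V\|_\infty$, $\delta_0$, $T$ and $\Omega$. The identity $|\vec n|^2=|\vec n_\perp|^2+(\vec n\cdot\vec m)^2$ then finishes the proof.

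I expect this last step to be the main technical obstacle: the Moser iteration has to be performed for a vector-valued equation, and the constants it produces — especially those coming from the bounded drift term — must be tracked carefully and shown to grow only sub-exponentially in $q$, so that the iterated product remains finite as $q\to\infty$. By contrast, Step 1 reduces essentially to the coercivity inequality \eqref{1.I} already present in the excerpt, and Step 2 is a fairly standard truncation argument once the cooperative reaction structure and the gradient-flow form of the drift are exploited.
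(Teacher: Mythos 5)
Your Steps (i) and (iii) track the paper closely: the paper likewise observes that for given $V$ the system is linear, establishes the G\r{a}rding inequality for the form $a$ by keeping $D(x)/\eta(x)^2$ under the integral and replacing the Poisson-equation trick for $a_V$ by Cauchy--Schwarz (your determinant computation $\eta^2/8\ge\delta_0/8$ is the right quantitative input), and for $\vec n_\perp$ it also runs a power-type iteration on \eqref{nperp} because the cross product rules out Stampacchia. Two minor technical points you gloss over there: the test function $|\vec n_\perp|^{q-2}\vec n_\perp$ must be regularized (the paper uses $g_\eps(y)=y^q/(1+\eps y^q)$), and the iteration must be started from an a priori $L^q$ (or at least $L^2$) bound, which your Step (i) supplies.

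Where you genuinely diverge is Step (ii). The paper does \emph{not} use the reweighting $u_\pm=e^Vn_\pm$; it proves $n_\pm\in L^\infty(0,T;L^1)$ from the entropy, derives $L^\infty(0,T;L^q)$ bounds for all $q<\infty$ by testing with truncated powers $(([n_\pm]_L-k)^+)^q$, invokes maximal regularity to legitimize the test functions $n_\pm^q-n_D^q$, and then closes the argument with the Alikakos--Kowalczyk iteration (Lemma \ref{lem.bd}), whose whole purpose is to convert the recursive differential inequality into a bound uniform in $t>0$. Your route is shorter and, when it applies, delivers the $t$-uniform bound even more directly. But it has a gap relative to the stated hypotheses: the theorem only assumes $|\na V|\in L^\infty(0,\infty;L^\infty(\Omega))$, which permits a time-dependent potential with no control whatsoever on $\pa_t V$. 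Your change of variables gives $\pa_t n_\pm=e^{-V}\bigl(\pa_t u_\pm-u_\pm\,\pa_t V\bigr)$, so the energy identity for $(u_\pm-K)^+$ acquires the uncontrolled term $\int e^{-V}u_\pm\,\pa_t V\,(u_\pm-K)^+\,dx$; the ``gradient-flow form of the drift'' is only available for stationary (or time-regular) $V$. To cover the hypotheses as stated you would need either to add an assumption on $\pa_t V$, or to replace the reweighting by a level-set (De Giorgi/Stampacchia-lemma) iteration in the original variables --- accepting the ``spurious'' source $K\int D_\pm\na V\cdot\na(n_\pm-K)^+$ and beating it by iterating in $K$ --- or to fall back on the paper's Moser/Alikakos machinery. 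Everything else in your outline is sound.
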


The well-posedness follows
from an abstract existence result since the system is linear. The difficulty
is to show the $L^\infty$ bounds. Here, we employ a variant of Alikakos'
iteration technique, modified by Kowalczyk \cite{Kow05} (see Lemma \ref{lem.bd}
in the appendix). 

\medskip\noindent
Our second result is the existence of an entropy (more precisely, free energy)
for the spinorial drift-diffusion system. This result holds also for
nonconstant diffusion coefficients $D(x)$ and spin polarizations $p(x)$.
The entropy is formulated
in terms of the solution to \eqref{1.nplus}-\eqref{1.nminus}:
\begin{align}
  H_0(t) &= \int_\Omega\left(h(n_+) + h(n_-) + \frac{\lambda_D^2}{2}|\na(V-V_D)|^2
	\right)dx, \label{H0} \\
  h(n_\pm) &= \int_{n_D/2}^{n_\pm}(\log s-\log(n_D/2))ds. \nonumber
\end{align}
The first two terms in the definition of $H_0(t)$
describe the internal energy of the two spin
components and the last term is the electric energy, relative to the boundary values.
From the results on the standard drift-diffusion model (see, e.g., \cite{GaGr86}),
it is not surprising that the entropy $H_0$ is nonincreasing in time if the initial
data are in thermal equilibrium. We say that $(n_{\rm th},V_{\rm th})$ is
a thermal equilibrium state if $n_{\rm th}=\rho \exp(-V_{\rm th})$ 
for some constant $\rho>0$ and $V_{\rm th}$ is the unique solution to
$$
  -\lambda_D^2\Delta V_{\rm th} = \rho e^{-V_{\rm th}}-C(x)\quad\mbox{in }\Omega,
	\quad V_{\rm th}=V_D\quad\mbox{on }\pa\Omega.
$$
For given $(n_D,V_D)$, defined on $\pa\Omega$ and satisfying $\log(n_D/2)+V_D=c\in\R$ 
on $\pa\Omega$, we extend these functions to $\Omega$ by setting
$n_D=n_{\rm th}$, $V_D=V_{\rm th}$ and $\rho=2\exp(c)$. Then $\log(n_D/2)+V_D=c$
in $\Omega$. The following result holds.

\begin{proposition}[Monotonicity of $H_0$]\label{prop.ent1}
Let \eqref{hypo.Dp} hold, $\log(n_D/2)+V_D=\mbox{\rm const.}$ in $\Omega$,
and $n_D\in W^{1,\infty}(\Omega)$ with $n_D\ge n_*>0$ in $\Omega$.
Let $(n_+,n_-,V)$ be a weak solution to \eqref{1.V}, 
\eqref{1.nplus}-\eqref{nplus.bic} in the sense of
Theorem \ref{thm.ex}. Then $t\mapsto H_0(t)$ is nonincreasing for $t>0$.
\end{proposition}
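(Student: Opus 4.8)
The plan is to differentiate $H_0$ along the flow and show that $dH_0/dt \le 0$. First I would compute the time derivative formally, using the weak formulations of \eqref{1.nplus}, \eqref{1.nminus}, and the Poisson equation \eqref{1.V}. Since $h'(n_\pm) = \log n_\pm - \log(n_D/2)$, differentiating the first two terms gives $\int_\Omega (h'(n_+)\partial_t n_+ + h'(n_-)\partial_t n_-)\,dx$; the key observation is that $h'(n_\pm)$ vanishes on $\partial\Omega$ (because $n_\pm = n_D/2$ there), so it is an admissible test function in $H_0^1(\Omega)$. Using \eqref{1.nplus}--\eqref{1.nminus} with this test function, the diffusion terms become, after integration by parts,
\begin{equation*}
  -\int_\Omega D(1+p)(\na n_+ + n_+\na V)\cdot\na(\log n_+ - \log(n_D/2))\,dx
\end{equation*}
and similarly for $n_-$. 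Writing $\na(\log n_\pm) = \na n_\pm / n_\pm$, the entropy production from the $n_+$ equation can be rearranged into $-\int_\Omega D(1+p) n_+ |\na(\log n_+ + V)|^2\,dx$ plus cross terms involving $\na V_D$ and $\na\log(n_D/2)$; here the hypothesis $\log(n_D/2) + V_D = \text{const.}$ is precisely what makes $\na\log(n_D/2) = -\na V_D$, so these cross terms combine into the single dissipative expression $-\int_\Omega D(1+p) n_+ |\na(\log n_+ + V) - \na(\log(n_D/2) + V_D)|^2\,dx = -\int_\Omega D(1+p) n_+ |\na(\log(n_+/(n_D/2)) + (V - V_D))|^2\,dx \le 0$.

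Next I would handle the electric energy term. Differentiating $\frac{\lambda_D^2}{2}\int_\Omega |\na(V - V_D)|^2\,dx$ gives $\lambda_D^2\int_\Omega \na(V-V_D)\cdot\na\partial_t V\,dx = -\lambda_D^2\int_\Omega \Delta V\,\partial_t(V - V_D)\,dx$ (using that $\partial_t V - \partial_t V_D = \partial_t V$ vanishes on $\partial\Omega$ since $V_D$ is time-independent). By the Poisson equation, $-\lambda_D^2\Delta V = \tr N - C = n_+ + n_- - C$, so this term equals $\int_\Omega (n_+ + n_- - C)\partial_t V\,dx = \int_\Omega (\partial_t n_+ + \partial_t n_-)\partial_t V\,dx$... but a cleaner route is to differentiate the Poisson equation in time, $-\lambda_D^2\Delta\partial_t V = \partial_t n_+ + \partial_t n_-$, test with $V - V_D$, and get $\lambda_D^2\int_\Omega \na\partial_t V\cdot\na(V-V_D)\,dx = \int_\Omega (\partial_t n_+ + \partial_t n_-)(V - V_D)\,dx$. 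Combining with the internal-energy terms, the total contribution of the drift part assembles so that every $\partial_t n_\pm$ pairs with $\log(n_\pm/(n_D/2)) + (V - V_D)$, which is exactly the gradient appearing in the dissipation; the reaction (spin-flip) terms $\frac{1}{2\tau}(n_- - n_+)$ and $\frac{1}{2\tau}(n_+ - n_-)$ contribute $\frac{1}{2\tau}\int_\Omega (n_- - n_+)\big(\log(n_+/(n_D/2)) - \log(n_-/(n_D/2))\big)\,dx = \frac{1}{2\tau}\int_\Omega (n_- - n_+)(\log n_+ - \log n_-)\,dx \le 0$ by monotonicity of the logarithm. Hence $dH_0/dt \le 0$.

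The main obstacle is not the formal computation but its rigorous justification. The issue is that $h'(n_\pm) = \log n_\pm - \log(n_D/2)$ is only an admissible test function if $n_\pm$ is bounded away from zero and above; Theorem~\ref{thm.ex} gives $0 \le n_\pm \in L^\infty$, but not a positive lower bound, so $\log n_\pm$ may fail to be in $H_0^1(\Omega)$, and $t \mapsto H_0(t)$ need not be differentiable. The standard remedy, which I would follow, is a regularization: replace $h$ by $h_\delta$ with $h_\delta'(s) = \log(\max(s,\delta)) - \log(n_D/2)$ (or a smooth variant), for which $h_\delta'(n_\pm) \in H_0^1 \cap L^\infty$ is a legitimate test function since $n_D \ge n_* > 0$. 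One derives the inequality $H_{0,\delta}(t) + (\text{nonnegative dissipation}) \le H_{0,\delta}(0)$, verifies that all terms are well-defined using the $W^{1,\infty}$ regularity of $V$ and $V_D$ and the $L^2(0,T;H^1)$ regularity of $n_\pm$, and then passes $\delta \to 0$ using monotone/dominated convergence (the truncated entropy densities converge monotonically, the dissipation terms are handled by Fatou). A secondary technical point is the low time-regularity of $\partial_t n_\pm \in L^2(0,T;H^{-1})$: the chain rule $\frac{d}{dt}\int_\Omega h_\delta(n_\pm)\,dx = \langle \partial_t n_\pm, h_\delta'(n_\pm)\rangle$ must be invoked in the form valid for $W^{1,2}(0,T;H_0^1,L^2)$ functions paired against their own nonlinear image (a standard lemma, e.g. via Strauss or the Lions--Magenes integration-by-parts formula), and similarly $\frac{d}{dt}\frac{\lambda_D^2}{2}\|\na(V-V_D)\|^2 = \langle \partial_t(n_++n_-), V - V_D\rangle$. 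Once these justifications are in place the result follows by integrating the differential inequality in time.
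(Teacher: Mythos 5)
Your proposal is correct and follows essentially the same route as the paper: regularize the logarithm so that $h_\delta'(n_\pm)$ is an admissible test function, pair $\pa_t n_\pm$ with $\log n_\pm+V-\log(n_D/2)-V_D$ using the Poisson equation for the electric term, identify the nonpositive quadratic dissipation and the nonpositive spin-flip contribution, and pass to the limit $\delta\to 0$. The only (immaterial) difference is the form of the regularization — you truncate at $\delta$ while the paper shifts to $\log(s+\delta)$ and absorbs the resulting $O(\delta)$ error terms via Young's inequality and Gronwall — and the thermal-equilibrium hypothesis $\log(n_D/2)+V_D=\mbox{const.}$ plays exactly the role you assign to it.
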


Using the techniques of \cite{GaGr86}, it is possible to infer the
exponential decay of $(n_+(t),n_-(t))$ to equilibrium as $t\to\infty$.
Since the proof is very similar to that of \cite{GaGr86}, we omit it.
However, we give a numerical example which illustrates the exponential decay.

One may ask if the quantum or von-Neumann entropy (or free energy)
\begin{equation}\label{HQ}
  H_Q(t) = \int_\Omega\Big(\tr[N(\log N-\log N_D-1)+N_D]
	+ \frac{\lambda_D^2}{2}|\na(V-V_D)|^2\Big)dx, 
\end{equation}
where $N_D=\frac12 n_D\sigma_0$ (see \eqref{1.Nbic}), is also nonincreasing in time.
Since the eigenvalues of $\log N$ are given by $\frac12 n_0\pm|\vec n|$, we
need to suppose that $\frac12 n_0>|\vec n|$ to have well-posedness of the
expression $\log N$. Because of the drift-diffusion structure of \eqref{1.N}
such a functional would be a natural candidate for an entropy.
However, we will show that this may be not the case.
In fact, a formal computation (detailed in Remark \ref{rem.HQ}) shows that
\begin{equation}\label{1.dHQdt}
  \frac{dH_Q}{dt}
	= -\int_\Omega Dn_0\sum_{j=1}^3\mbox{tr}\left[N\big(\pa_j(\log N+V\sigma_0)
	P^{-1/2}\big)^2\right]dx 
	- \frac{1}{\tau}\int_\Omega|\vec n|
	\log\frac{\tfrac12n_0+|\vec n|}{\tfrac12 n_0-|\vec n|}dx. 
\end{equation}
While the second integral is nonnegative, this may be not true for the first one.
We show in Remark \ref{rem.neg} that the integrand of the first integral 
may be negative for certain values of the variables.

In the special case $\vec m=(0,0,1)^\top$ and $\vec n_0=(0,0,n_3^0)^\top$, 
which we assume in our numerical simulations in Section \ref{sec.numer}, 
the spin-vector density only depends on the third component,
$\vec n(t)=(0,0,n_3(t))^\top$ for all $t>0$. 
In this situation, $H_Q$ is nonincreasing. This can be
seen by writing $H_Q$ equivalently as
\begin{align*}
  H_Q(t) &= \int_\Omega\Big((\tfrac12 n_0+|\vec n|)
	\big(\log(\tfrac12 n_0+|\vec n|)-1\big)
	+ (\tfrac12 n_0-|\vec n|)\big(\log(\tfrac12 n_0-|\vec n|)-1\big) \\
	&\phantom{xx}{}+ n_D - n_0\log(\tfrac12 n_D)
	+ \frac{\lambda_D^2}{2}|\na(V-V_D)|^2\Big)dx,
\end{align*}
This formulation follows from spectral theory and the fact that the
eigenvalues of $N$ are given by $\frac12 n_0\pm|\vec n|$
\cite[Section 2]{PoNe11}. We recall that
for any continuous function $f:\R\to\R$ and any Hermitian matrix $A$
with (real) eigenvalues $\lambda_j$, it holds that $\tr[f(A)]=\sum_j f(\lambda_j)$.
Now, $\frac12n_0\pm|\vec n|=\frac12n_0\pm n_3=n_\pm$, and consequently,
$H_Q$ coincides with $H_0$, which is monotone by Proposition \ref{prop.ent1}.

The paper is organized as follows. Theorems \ref{thm.ex} and \ref{thm.ex2} are proved
in Section \ref{sec.ex}. Proposition \ref{prop.ent1} and formula \eqref{1.dHQdt}
are shown in Section \ref{sec.ent}. Some numerical results for a 
one-dimensional ballistic diode in a multilayer structure 
using a finite-volume scheme are presented in Section \ref{sec.numer}.
The appendix is concerned with the proof of a
general boundedness result needed for the proof of Theorem \ref{thm.ex2}.

%%%%%%%%%%%%%%%%%%%%%%%%%%%%%%%%%%%%%%%%%%%%%%%%%%%%%%%%%%%%%%%%%%%%%%%%%%%%%

\section{Existence of solutions}\label{sec.ex}

%We prove the existence results.

\subsection{Proof of Theorem \ref{thm.ex}}

The existence proof is based on the Leray-Schauder fixed-point theorem and
a truncation argument. It is divided into several steps.

{\em Step 1: Reformulation.}
We introduce the variable $w_0=n_0-n_D(x)$ whose trace vanishes on $\pa\Omega$. 
Then equations \eqref{1.n0}-\eqref{1.J0} are equivalent to
\begin{align}
  & \frac14\pa_t w_0 - \frac{D}{4\eta^2}\diver(J_w - 2p\vec J\cdot\vec m) 
  = \frac{D}{4\eta^2}\diver(\na n_D+n_D\na V), \label{ex.w0} \\
  & \pa_t n_k - \frac{D}{\eta^2}\diver\left(\eta J_k 
  + (1-\eta)(\vec J\cdot\vec m)m_k
  - \frac{p}{2}J_wm_k\right) - 2\gamma(\vec n\times\vec m)_k \label{ex.nj} \\
  &\phantom{xx}{}
  = -\frac{n_k}{\tau} -\frac{Dp}{2\eta^2}\diver\big((\na n_D+n_D\na V)m_k\big),
  \quad k=1,2,3, \nonumber
\end{align}
where $J_w = \na w_0+w_0\na V$. The boundary and initial conditions are given by
\begin{equation}\label{ex.bic}
  w_0 = n_k = 0\quad\mbox{on }\pa\Omega,\ k=1,2,3, \quad 
  w_0(\cdot,0)=n_0^0-n_D, \ \vec n(0)=\vec n^0\quad\mbox{in }\Omega.
\end{equation}

{\em Step 2: Definition of the fixed-point operator.}
The idea is to fix a density $(\rho_0,\vec\rho)$, to solve the Poisson
equation including $\rho_0$ on its right-hand side, and finally to solve
a linearized version of \eqref{ex.w0}-\eqref{ex.nj} 
for the density $(w_0,\vec w)$, where $\vec w=(w_1,w_2,w_3)$. The
fixed-point operator is then defined by the mapping 
$(\rho_0,\vec\rho)\mapsto V\mapsto (w_0,\vec w)$. More precisely,
let $\rho=(\rho_0,\vec\rho)\in L^2(0,T;L^2(\Omega))^4$ 
and $\delta\in[0,1]$ be given and 
introduce the truncation $[x]=\max\{0,\min\{x,2M\}\}$ for $x\in\R$, where
$$
  M = \max\Big\{\sup_{\pa\Omega}n_D,\ 
	\sup_\Omega\Big(\frac12n_0^0+|\vec n^0\cdot\vec m|\Big),\
  \sup_\Omega C\Big\}.
$$
Let $V(t)\in H^1(\Omega)$ be the unique solution to
\begin{equation}\label{ex.V}
  -\lambda^2\Delta V(t) = [\rho_0(t)+n_D]-C(x)\quad \mbox{in }\Omega, \quad
  V(t)=V_D\quad\mbox{on }\pa\Omega.
\end{equation}
Then $t\mapsto V(t)$ is Bochner measurable and $V\in L^2(0,T;H^1(\Omega))$.
In fact, by elliptic regularity and $\pa\Omega\in C^{1,1}$, 
$V(t)\in W^{2,q_0}(\Omega)$ for $q_0>3$.
Since the right-hand side of \eqref{ex.V} is an element of 
$L^\infty(0,T;L^\infty(\Omega))$,
$V\in L^\infty(0,T;W^{2,q_0}(\Omega))$. This implies, because of the 
Sobolev embedding $W^{2,q_0}(\Omega)\hookrightarrow W^{1,\infty}(\Omega)$
in dimensions $d\le 3$, that $|\na V|\in L^\infty(0,T;L^\infty(\Omega))$.

Next, we define the bilinear form 
$a(\cdot,\cdot;t):H_0^1(\Omega)^4\times H_0^1(\Omega)^4\to\R$ 
for all $w=(w_0,\vec w)=(w_0,w_1,w_2,w_3)$, 
$\phi=(\phi_0,\vec\phi)=(\phi_0,\phi_1,\phi_2,\phi_3)\in H_0^1(\Omega)^4$ by
\begin{equation}\label{ex.a}
  a(w,\phi;t) = a_0(w,\phi) + a_V(w,\phi;t) + a_1(w,\phi) + a_2(w,\phi),
\end{equation}
where 
\begin{align*}
  a_0(w,\phi) &= \frac{D}{\eta^2}\int_\Omega\Big(\frac14\na w_0\cdot\na\phi_0
  + \eta\na\vec w:\na\vec\phi
  + (1-\eta)\na(\vec w\cdot\vec m)\cdot\na(\vec\phi\cdot\vec m) \\
  &\phantom{xx}{}
  - \frac{p}{2}\na(\vec w\cdot\vec m)\cdot\na\phi_0 
  - \frac{p}{2}\na w_0\cdot\na(\vec\phi\cdot\vec m)\Big)dx, \\
  a_V(w,\phi;t) &=
  \frac{D\delta}{\eta^2}\int_\Omega\na V(t)\cdot\Big(\frac14 w_0\na\phi_0
  + \eta \na\vec\phi\cdot\vec w + (1-\eta)(\vec w\cdot\vec m)
  \na(\vec\phi\cdot\vec m) \\
  &\phantom{xx}{}
  - \frac{p}{2}(\vec w\cdot\vec m)\na\phi_0
  - \frac{p}{2}w_0\na(\vec\phi\cdot\vec m)\Big)dx, \\
  a_1(w,\phi) &=
  -2\gamma\delta\int_\Omega(\vec w\times\vec m)\cdot\vec\phi dx, \\
  a_2(w,\phi) &=
  \frac{1}{\tau}\int_\Omega \vec w\cdot\vec\phi dx, 
\end{align*}
and $\na\vec w:\na\vec\phi=\sum_{k=1}^3\na w_k\cdot\na\phi_k$, and the linear
mapping $F(\cdot;t):H_0^1(\Omega)^4\to\R$ by
$$
  F(\phi;t) = -\frac{D\delta}{4\eta^2}\int_\Omega(\na n_D+n_D\na V)\cdot\na\phi_0
  + \frac{D\delta p}{2\eta^2}\int_\Omega
  (\na n_D+n_D\na V)\cdot\na(\vec\phi\cdot\vec m)dx.
$$
Then the weak formulation of \eqref{ex.w0}-\eqref{ex.bic} (for $\delta=1$) reads as
\begin{equation}\label{ex.weak}
  \frac{d}{dt}\int_\Omega w(t)\cdot\phi dx + a(w,\phi;t) = F(\phi;t)
	\quad\mbox{for }\phi\in H_0^1(\Omega),\ t>0.
%  \int_0^T\langle\pa_t w,\phi\rangle dt + \int_0^T a(w,\phi)dt = \int_0^T F(\phi)dt
%  \quad\mbox{for }\phi\in L^2(0,T;H_0^1(\Omega))^4.
\end{equation}

Since $|\na V|\in L^\infty(0,T;L^\infty(\Omega))$, an elementary estimation shows that
$a$ and $F$ are bounded in the sense
$$
  a(w,\phi;t) \le K_0\|w\|_{H_0^1(\Omega)}\|\phi\|_{H_0^1(\Omega)}, \quad
	|F(\phi;t)|\le K_0\|\phi\|_{H_0^1(\Omega)}
$$
for all $w$, $\phi\in H_0^1(\Omega)^4$, where $K_0>0$ depends on
the $L^\infty(0,T;L^\infty(\Omega))$ norm of $\na V$ (and hence on $M$)
but is independent of $t>0$.
We claim that $a$ satisfies an abstract
G\r{a}rding inequality, i.e., there exist $K_1$, $K_2>0$ such that for all
$w\in H_0^1(\Omega)^4$,
$$
  a(w,w) \ge K_1\|w\|_{H_0^1(\Omega)}^2 - K_2\|w\|_{L^2(\Omega)}^2.
$$
To this end, we estimate the forms $a_0$, $a_V$, $a_1$, and $a_2$.
The first form equals
\begin{align*}
  a_0(w,w) &= \frac{D}{\eta^2}\int_\Omega\Big(\frac14|\na w_0|^2
  + \frac{\eta^2}{2}\|\na\vec w\|^2 + \eta\Big(1-\frac{\eta}{2}\Big)\|\na\vec w\|^2 \\
  &\phantom{xx}{}+ (1-\eta)|\na(\vec w\cdot\vec m)|^2
  - p\na w_0\cdot\na(\vec w\cdot\vec m)\Big)dx.
\end{align*}
Estimate \eqref{1.I} in the introduction shows that
\begin{equation}\label{ex.coer}
  a_0(w,w) \ge \frac{D}{\eta^2}\int_\Omega 
  \begin{pmatrix} \na w_0 \\ \na \vec w\cdot\vec m \end{pmatrix}^\top
  \begin{pmatrix} 1/4 & -p/2 \\ -p/2 & 1-\eta^2/2\end{pmatrix}
  \begin{pmatrix} \na w_0 \\ \na\vec w\cdot\vec m \end{pmatrix}dx
  + \frac{D}{2}\int_\Omega \|\na \vec w\|^2 dx.
\end{equation}
The above symmetric matrix is positive definite because its eigenvalues
$$
  \lambda_\pm = \frac18(5-2\eta^2) \pm \frac18\sqrt{(5-2\eta^2)^2-8\eta^2}
$$
are real (since $\eta\le 1$) and positive (since $\eta>0$). This leads to
$$
  a_0(w,w;t) \ge \frac{D}{\eta^2}\int_\Omega \Big(\lambda_-|\na w_0|^2
  + \lambda_-|\na(\vec w\cdot\vec m)|^2 + \frac{\eta^2}{2}\|\na\vec w\|^2\Big)dx
  \ge K_1\|w\|_{H_0^1(\Omega)}^2,
$$
where $K_1=\min\{\lambda_-,\eta^2/2\}$.

In order to estimate the second form $a_V(w,w)$, we employ the Poisson equation:
\begin{align*}
  a_V( & w,w) = \frac{D\delta}{2\eta^2}\int_\Omega\na V\cdot\na\Big(
  \frac14 w_0^2 + \eta|\vec w|^2 + (1-\eta)(\vec w\cdot\vec m)^2
  - p w_0(\vec w\cdot\vec m)\Big)dx \\
  &= \frac{D\delta}{2\eta^2\lambda_D^2}\int_\Omega([\rho_0+n_D]-C(x))
  \Big(\frac14 w_0^2 + \eta|\vec w|^2 + (1-\eta)(\vec w\cdot\vec m)^2
  - p w_0(\vec w\cdot\vec m)\Big)dx.
\end{align*}
Observing that
\begin{align*}
  \frac14 w_0^2 &+ \eta|\vec w|^2 + (1-\eta)(\vec w\cdot\vec m)^2
  - p w_0(\vec w\cdot\vec m) \\
	&= \Big(\frac12 w_0-p\vec w\cdot\vec m\Big)^2
	+ \eta\big(|\vec w|^2-(\vec w\cdot\vec m)^2\big) + \eta^2(\vec w\cdot\vec m)^2
	\ge 0
\end{align*}
and employing the Cauchy-Schwarz inequality, it follows that
\begin{align*}
  a_V(w,w) &\ge -\frac{D}{2\eta^2\lambda_D^2}\int_\Omega |C(x)|
	\Big(\frac14 w_0^2 + \eta|\vec w|^2 + (1-\eta)(\vec w\cdot\vec m)^2
  - p w_0(\vec w\cdot\vec m)\Big)dx \\
	&\ge -K_2\|w\|_{L^2(\Omega)}^2,
\end{align*}
where $K_2$ depends on $\|C\|_{L^\infty(\Omega)}$ and the parameters 
$D$, $p$, and $\lambda_D$ but not on $M$ or $\rho$. 
Finally, the third form vanishes, $a_1(w,w)=0$,
and the fourth form is nonnegative, $a_2(w,w)\ge 0$. This shows the claim.

By Corollary 23.26 in \cite{Zei90}, there exists a unique solution
$w\in W^{1,2}(0,T;H_0^1,L^2)^4$ to \eqref{ex.weak}
satisfying $w(0)=(n_0^0-n_D,\vec n^0)$. This defines the fixed-point
operator $S:L^2(0,T;L^2(\Omega))^4\times[0,1]\to L^2(0,T;L^2(\Omega))^4$,
$S(\rho,\delta)=w$. By construction, $S(\rho,0)=0$. Furthermore, standard
arguments show that $S$ is continuous. By the Aubin lemma,
the space $W^{1,2}(0,T;H_0^1,$ $L^2)^4$
embeddes compactly into $L^2(0,T;L^2(\Omega))^4$. Consequently, $S$ is compact.
It remains to prove some uniform estimates for all fixed points of $S(\cdot,\delta)$
in $L^2(0,T;L^2(\Omega))^4$.
Let $w\in W^{1,2}(0,T;H_0^1,L^2)^4$ be such a fixed point.
Employing $w=(w_0,\vec w)$ as a test function in \eqref{ex.weak}, 
the above estimates show that
\begin{align*}
  \frac12\|w(\cdot,T)\|_{L^2(\Omega)}^2 + K_1\|w\|_{L^2(0,T;H_0^1(\Omega))}^2
  &\le K_2\|w\|_{L^2(0,T;L^2(\Omega))}^2 + K_0\|w\|_{L^2(0,T;H_0^1(\Omega))} \\
  &\le K_2\|w\|_{L^2(0,T;L^2(\Omega))}^2 
	+ \frac{K_1}{2}\|w\|_{L^2(0,T;H_0^1(\Omega))}^2 
  + \frac{K_0^2}{2K_1}.
\end{align*}
Absorbing the second summand on the right-hand side
by the corresponding term on the left-hand side
and applying Gronwall's lemma, we achieve the bound
$\|w\|_{L^2(0,T;L^2(\Omega))}\le K$ for some $K>0$ uniform in $\rho$ and $\delta$.
By the Leray-Schauder fixed-point theorem, there exists a fixed point of
$S(\cdot,1)$, i.e.\ a solution to \eqref{ex.w0}-\eqref{ex.V}
with $[\rho(t)+n_D]$ replaced by $[w(t)+n_D]$.

{\em Step 3: Lower and upper bounds.} We show that $0\le n_0:=w+n_D\le 2M$ in
$\Omega\times(0,T)$ which allows us to remove the truncation in the Poisson equation.
For this, we consider the variables $n_\pm = \frac12 n_0\pm \vec n\cdot\vec m$.
We claim that $n_\pm\ge 0$. Indeed, with the test functions
$[n_\pm]^-=\min\{0,n_\pm\}$, which satisfy $[n_\pm]^-=0$ on $\pa\Omega$ and
$[n_\pm(0)]^-=0$, in the weak formulation of \eqref{1.nplus}
and \eqref{1.nminus}, respectively, it follows from
$n_\pm\na V\cdot\na[n_\pm]^-=[n_\pm]^-\na V\cdot\na[n_\pm]^-=\frac12\na V\cdot
\na([n_\pm]^-)^2$ that
\begin{align*}
  \frac12\int_\Omega &([n_\pm(t)]^-)^2 dx
  + D(1\pm p)\int_0^t\int_\Omega|\na[n_\pm]^-|^2 dx\,ds \\
  &= -D(1\pm p)\int_0^t\int_\Omega [n_\pm]^-\na V\cdot\na[n_\pm]^- dx\,ds
  \mp \frac{1}{2\tau}\int_0^t\int_\Omega(n_+-n_-)[n_\pm]^- dx\,ds \\
  &= -\frac{D(1\pm p)}{2\lambda_D^2}\int_0^t\int_\Omega([n_0]-C(x))([n_\pm]^-)^2 
  dx\,ds
  \mp \frac{1}{2\tau}\int_0^t\int_\Omega(n_+-n_-)[n_\pm]^- dx\,ds.
\end{align*}
In the last step we have integrated by parts and employed the Poisson equation.
We add both equations and neglect the integrals involving $|\na[n_\pm]^-|^2$ to obtain
\begin{align*}
  \frac12\int_\Omega &\big(([n_+(t)]^-)^2 + ([n_-(t)]^-)^2\big) dx \\
  &\le -\frac{D}{2\lambda_D^2}\int_0^t\int_\Omega([n_0]-C(x))\big((1+p)([n_+]^-)^2 
  + (1-p)([n_-]^-)^2\big)dx\,ds \\
  &\phantom{xx}{}
	- \frac{1}{2\tau}\int_0^t\int_\Omega(n_+-n_-)([n_+]^- - [n_-]^-)dx\,ds \\
  &\le \frac{D}{2\lambda_D^2}\|C\|_{L^\infty(\Omega)}\int_0^t\int_\Omega
  \big(([n_+]^-)^2 + ([n_+]^-)^2\big)dx\,ds,
\end{align*}
using the fact that $x\mapsto[x]^-$ is nondecreasing.
Gronwall's lemma shows that $[n_\pm(t)]^-=0$ in $\Omega$ for any $t>0$ and hence,
$n_\pm\ge 0$ in $\Omega\times(0,T)$. 

For the proof of the upper bound, we employ the test function 
$[n_\pm-M]^+=\max\{0,n_\pm-M\}$,
which is admissible since $M\ge \frac12\sup_{\Omega\times(0,T)}n_D$,
and which satisfies $[n_\pm(0)-M]^-=0$, in \eqref{1.nplus}
and \eqref{1.nminus}, respectively, and add both equations:
\begin{align*}
  \frac12\int_\Omega &\big([n_+(t)-M]^{+2} + [n_-(t)-M]^{+2}\big)dx \\
  &\phantom{xx}{}+ D\int_0^t\int_\Omega
  \big((1+p)|\na[n_+-M]^+|^2 + (1-p)|\na[n_--M]^+|^2\big)dx\,ds \\
  &= -D\int_0^t\int_\Omega\big((1+p)(n_+-M)\na V\cdot\na[n_+-M]^+ \\
  &\phantom{xx}{}+ (1-p)(n_--M)\na V\cdot\na[n_--M]^+\big)dx\,ds \\
  &\phantom{xx}{}- DM\int_0^t\int_\Omega\na V\cdot\big((1+p)\na[n_+-M]^+  
  + (1-p)\na[n_--M]^+\big)dx\,ds \\
  &\phantom{xx}{}
  - \frac{1}{2\tau}\int_0^t\int_\Omega(n_+-n_-)\big([n_+-M]^+ - [n_--M]^+\big)dx\,ds.
\end{align*}
Observing that $(n_\pm-M)\na V\cdot\na[n_\pm-M]^+=\frac12\na V\cdot\na([n_\pm-M]^+)^2$,
integrating by parts, and employing the Poisson equation, we find that
\begin{align*}
  \frac12\int_\Omega &\big(([n_+(t)-M]^{+})^2 + ([n_-(t)-M]^{+})^2\big)dx \\
  &\le -\frac{D}{2\lambda_D^2}\int_0^t\int_\Omega
  ([n_0]-C(x))\big((1+p)([n_+-M]^+)^2 + (1-p)([n_--M]^+)^2\big)dx\,ds \\
  &\phantom{xx}{}- \frac{DM}{\lambda_D^2}\int_0^t\int_\Omega
  ([n_0]-C(x))\big((1+p)[n_+-M]^+ + (1-p)[n_--M]^+\big)dx\,ds.
\end{align*}
The last integral is nonnegative since $[n_0]-C(x)=[n_++n_-]-C(x)\ge 0$ 
on $\{n_\pm\ge M\}$ by definition of $M$.
Then Gronwall's lemma gives $[n_\pm-M]^+=0$ and
$n_\pm\le M$ in $\Omega\times(0,T)$. We have shown that $n_0=n_++n_-\le 2M$,
$\vec n\cdot\vec m=\frac12(n_+-n_-)\le \frac12 n_+\le \frac12 M$, and
$\vec n\cdot\vec m\ge -\frac12 n_-\ge -\frac12 M$.
Thus, we can remove the truncation in the Poisson equation, since $[n_0]=n_0$.

{\em Step 4: Uniqueness of solutions.}
Let $(w,V)$ and $(w^*,V^*)$ with $w=(w_0,\vec w)$ and $w^*=(w_0^*,\vec w^*)$ 
be two weak solutions to \eqref{ex.V} and \eqref{ex.weak}. 
Taking the difference of the equations
\eqref{ex.weak} for $w$ and $w^*$, respectively, and employing the
admissible test function $w-w^*$, we obtain
\begin{align*}
  \frac12\|&(w-w^*)(T)\|_{L^2(\Omega)}^2
  + \int_0^T\big(a(w,w-w^*;t)-a^*(w^*,w-w^*;t)\big)dt \\
  &= \int_0^T(F(w-w^*;t)-F^*(w-w^*;t))dt,
\end{align*}
where $a^*$ and $F^*$ denote the forms with $V$ replaced by $V^*$.
The Cauchy-Schwarz and Young inequalities and the elliptic estimate 
$\|\na(V-V^*)\|_{L^2(\Omega)}\le K\|w_0-w_0^*\|_{L^2(\Omega)}$ yield
\begin{align*}
  \int_0^T &(F(w-w^*;t)-F^*(w-w^*;t))dt
  = -\frac{D}{4\eta^2}\int_0^T\int_\Omega n_D\na(V-V^*)\cdot\na(w_0-w_0^*)dx\,dt \\
  &\phantom{xx}{}+ \frac{Dp}{2\eta^2}\int_0^T\int_\Omega n_D\na(V-V^*)\cdot
  \na((\vec{w}-\vec{w}^*)\cdot\vec m)dx\,dt \\
%  &\le K\|\na(V-V^*)\|_{L^2(0,T;L^2(\Omega))}
%	\|\na(w_0-w_0^*)\|_{L^2(0,T;L^2(\Omega))}\\
%  &\phantom{xx}{}+ K\|\na(V-V^*)\|_{L^2(0,T;L^2(\Omega))}
%  \|\na((\vec{w}-\vec{w}^*)\cdot\vec m)\|_{L^2(0,T;L^2(\Omega))} \\
  &\le \eps\|\na(w-w^*)\|_{L^2(0,T;L^2(\Omega))}^2 
  + K(\eps)\|w_0-w_0^*\|_{L^2(0,T;L^2(\Omega))}^2,
\end{align*}
where here and in the following, $K>0$ denotes a generic constant and $\eps>0$.

Next, we consider the difference $a(w,w-w^*;t)-a^*(w^*,w-w^*;t)$.
As in Step 2, we find that
\begin{align*}
  \int_0^T a_0(w-w^*,w-w^*)dt &\ge K\int_0^T\int_\Omega\big(|\na(w_0-w_0^*)|^2
%  + |\na((\vec{w}-\vec{w}^*)\cdot\vec m)|^2 \\
%  &\phantom{xx}{}
+ \|\na(\vec{w}-\vec{w}^*)\|^2\big)dx\,dt, \\
  \int_0^T a_1(w-w^*,w-w^*)dt &= 0, \\
  \int_0^T a_2(w-w^*,w-w^*)dt &= \frac{1}{\tau}\int_0^T\int_\Omega 
  |\vec{w}-\vec{w}^*|^2 dx\,dt \ge 0.
\end{align*}
The remaining difference involving the electric potentials becomes
\begin{align*}
  \int_0^T & \big(a_V(w,w-w^*;t)-a_{V^*}(w^*,w-w^*;t)\big)dt \\
  &= \frac{D}{2\eta^2}\int_0^T\int_\Omega\na V\cdot\na\Big(\frac14(w_0-w_0^*)^2
  + \eta|\vec{w}-\vec{w}^*|^2 + (1-\eta)((\vec{w}-\vec{w}^*)\cdot\vec m)^2 \\
  &\phantom{xx}{}
  - \frac{p}{2}(w_0-w_0^*)(\vec{w}-\vec{w}^*)\cdot\vec m\Big)dx\,dt \\
  &\phantom{xx}{}+ \frac{D}{\eta^2}\int_0^T\int_\Omega\na(V-V^*)\cdot\Big(
  \frac14 w_0^*\na(w_0-w_0^*) + \eta\vec{w}^*\na(\vec{w}-\vec{w}^*) \\
  &\phantom{xx}{}
  + (1-\eta)(\vec{w}^*\cdot\vec m)\na((\vec{w}-\vec{w}^*)\cdot\vec m) \\
  &\phantom{xx}{}
  - \frac{p}{2}(\vec{w}^*\cdot\vec m)\na(w_0-w_0^*)
  - \frac{p}{2}w_0^*\na((\vec{w}-\vec{w}^*)\cdot\vec m)\Big)dx\,dt.
\end{align*}
Integrating by parts in the first integral on the right-hand side
and employing the Poisson equation
shows that the first integral can be estimated from above by
$$
  K\big(\|w_0-w_0^*\|_{L^2(0,T;L^2(\Omega))}^2
  + \|\vec{w}-\vec{w}^*\|_{L^2(0,T;L^2(\Omega))}^2
  + \|(\vec{w}-\vec{w}^*)\cdot\vec m\|_{L^2(0,T;L^2(\Omega))}^2\big),
$$
where $K>0$ depends on the $L^\infty(0,T;L^\infty(\Omega))$ norm of $w_0$.
We take into account the $L^\infty$ norms of
$w_0^*$ and $\vec{w}^*\cdot\vec m$ to estimate the second integral from above by
\begin{align}
  & K\|\na(V-V^*)\|_{L^2(0,T;L^2(\Omega))}\big(
  \|\na(w_0-w_0^*)\|_{L^2(0,T;L^2(\Omega))} 
  + \|(\vec{w}-\vec{w}^*)\cdot\vec m\|_{L^2(0,T;L^2(\Omega))}\big) \nonumber \\
  &\phantom{xx}{}
  + \|\na(V-V^*)\|_{L^2(0,T;L^6(\Omega))}\|\vec{w}\|_{L^\infty(0,T;L^2(\Omega))}
  \|\vec{w}-\vec{w}^*\|_{L^2(0,T;L^3(\Omega))}. \label{ex.aux}
\end{align}
By elliptic regularity and Sobolev embedding, it follows that
$$
  \|\na(V-V^*)\|_{L^6(\Omega)}
  \le K\|V-V^*\|_{H^2(\Omega)} \le K\|w_0-w_0^*\|_{L^2(\Omega)}.
$$
Furthermore, by the Gagliardo-Nirenberg inequality,
$$
  \|\vec{w}-\vec{w}^*\|_{L^2(0,T;L^3(\Omega))}
  \le K\|\vec{w}-\vec{w}^*\|_{L^2(0,T;L^2(\Omega))}^{1/2}
  \|\na(\vec{w}-\vec{w}^*)\|_{L^2(0,T;L^2(\Omega))}^{1/2}.
$$
Putting together these estimates and using Young's inequality as well as the
embedding $W^{1,2}(0,T;H_0^1,L^2)
\hookrightarrow L^\infty(0,T;L^2(\Omega))$, we obtain for any $\eps>0$,
\begin{align*}
  \|&\na(V-V^*)\|_{L^2(0,T;L^6(\Omega))}\|\vec{w}\|_{L^\infty(0,T;L^2(\Omega))}
  \|\vec{w}-\vec{w}^*\|_{L^2(0,T;L^3(\Omega))} \\
  &\le K\|w_0-w_0^*\|_{L^2(0,T;L^2(\Omega))}
  \|\vec{w}-\vec{w}^*\|_{L^2(0,T;L^2(\Omega))}^{1/2}
  \|\na(\vec{w}-\vec{w}^*)\|_{L^2(0,T;L^2(\Omega))}^{1/2} \\
  &\le \eps\|\na(\vec{w}-\vec{w}^*)\|_{L^2(0,T;L^2(\Omega))}^2
  + K(\eps)\|w_0-w_0^*\|_{L^2(\Omega)}^2
  + K(\eps)\|\vec{w}-\vec{w}^*\|_{L^2(0,T;L^2(\Omega))}^2.
\end{align*}
Hence, \eqref{ex.aux} can be estimated from above by
$$
  \eps\|\na(\vec{w}-\vec{w}^*)\|_{L^2(0,T;L^2(\Omega))}^2
  + K(\eps)\|w_0-w_0^*\|_{L^2(\Omega)}^2
  + K(\eps)\|\vec{w}-\vec{w}^*\|_{L^2(0,T;L^2(\Omega))}^2.
$$

Summarizing the above estimates, we infer that
\begin{align*}
  \|& (w-w^*)(T)\|_{L^2(\Omega)}^2 + K\int_0^T\big(\|\na(w_0-w_0^*)\|_{L^2(\Omega)}^2
%  + \|\na((\vec{w}-\vec{w}^*)\cdot\vec m)\|_{L^2(\Omega)}^2 \\
%  &\phantom{xx}{}
+ \|\na(\vec{w}-\vec{w}^*)\|_{L^2(\Omega)}^2\big)dt \\
  &\le \eps\int_0^T\|\na(\vec{w}-\vec{w}^*)\|_{L^2(\Omega)}^2 dt 
%  &\phantom{xx}{}
  + K(\eps)\int_0^T\big(\|w_0-w_0^*\|_{L^2(\Omega)}^2
  + \|\vec{w}-\vec{w}^*\|_{L^2(\Omega)}^2
%  + \|(\vec{w}-\vec{w}^*)\cdot\vec m\|_{L^2(\Omega)}^2
  \big)dt.
\end{align*}
Thus, choosing $\eps>0$ sufficiently small and employing Gronwall's lemma,
we infer that $w=w^*$ in $\Omega$, $t>0$.

{\em Step 5: $L^\infty$ bound for $\vec n$.}
Let $q\ge 1$. Since $|\vec n_\perp|^q \vec n_\perp$ 
is not an admissible test function, we need to regularize. 
For this, set $g_\eps(y) = y^q/(1+\eps y^q)$ and
$G_\eps(y)=\int_0^y g_\eps(z)dz$ for $y\ge 0$ and $\eps>0$ .
Then $g_\eps(|\vec n_\perp|^2)\vec n_\perp$ is an admissible test function in the weak 
formulation of \eqref{nperp} (since $\vec n=0$ on $\pa\Omega$):
\begin{align}
  \int_0^t \langle\pa_t & \vec n_\perp,g_\eps(|\vec n_\perp|^2)\vec n_\perp\rangle ds
	+ \frac{D}{\eta}\int_0^t\int_\Omega(\na\vec n_\perp+\na V\vec n_\perp)
	:\na(g_\eps(|\vec n_\perp|^2)\vec n_\perp)dx\,ds \nonumber \\
	&= 2\gamma\int_0^t\int_\Omega g_\eps(|\vec n_\perp|^2)(\vec n_\perp\times\vec m)
	\cdot\vec n_\perp dx\,ds
	- \frac{1}{\tau}\int_0^t\int_\Omega g_\eps(|\vec n_\perp|^2)|\vec n_\perp|^2 dx\,ds 
	\le 0, \label{b.aux}
%	&= - \frac{1}{\tau}\int_0^t\int_\Omega g_\eps(|\vec n_\perp|^2)
% |\vec n_\perp|^2 dx\,ds 
%	\le 0, \nonumber
\end{align}
since the first integral on the right-hand side vanishes and the second one
is nonnegative. Here,
$\langle\cdot,\cdot\rangle$ denotes the dual product between $H^{-1}(\Omega)$
and $H_0^1(\Omega)$. Taking into account a variant of Prop.~23.20 in \cite{Zei90},
the first integral on the left-hand side of \eqref{b.aux} equals
$$
  \langle\pa_t \vec n_\perp,g_\eps(|\vec n_\perp|^2)\vec n_\perp\rangle
	= \frac12\,\frac{d}{dt}\int_\Omega G_\eps(|\vec n_\perp|^2) dx.
$$
Setting $F_\eps(y) = \int_0^y g_\eps'(z)z dz$ and employing the Poisson
equation \eqref{1.V}, the second integral in \eqref{b.aux} can be estimated as
\begin{align*}
  \frac{D}{\eta}\int_0^t \int_\Omega &
	\Big(\frac14g_\eps'(|\vec n_\perp|^2)\big|\na(|\vec n_\perp|^2)\big|^2 
	+ g_\eps(|\vec n_\perp|^2)\|\na \vec n_\perp\|^2 \\
	&\phantom{xx}{}+ g_\eps'(|\vec n_\perp|^2)|\vec n_\perp|^2
	\na(|\vec n_\perp|^2)\cdot\na V
	+ \frac12 g_\eps(|\vec n_\perp|^2)\na(|\vec n_\perp|^2)\cdot\na V\Big)dx\,ds \\
	&\ge \frac{D}{2\eta}\int_0^t \int_\Omega
	\na\big(2F_\eps(|\vec n_\perp|^2)+G_\eps(|\vec n_\perp|^2)\big)\cdot\na V dx\,ds \\
	&= \frac{D}{2\eta\lambda_D^2}\int_0^t\int_\Omega
	\big(2F_\eps(|\vec n_\perp|^2)+G_\eps(|\vec n_\perp|^2)\big)(n_0-C(x))dx\,ds \\
	&\ge -\frac{D}{2\eta\lambda_D^2}\|C\|_{L^\infty(\Omega)}\int_0^t\int_\Omega
	\big(2F_\eps(|\vec n_\perp|^2)+G_\eps(|\vec n_\perp|^2)\big)dx\,ds.
\end{align*}
We have proved that
$$ 
  \frac{d}{dt}\int_\Omega G_\eps(|\vec n_\perp|^2) dx
	\le \frac{D}{\eta\lambda_D^2}\|C\|_{L^\infty(\Omega)}\int_0^t\int_\Omega
	\big(2F_\eps(|\vec n_\perp|^2)+G_\eps(|\vec n_\perp|^2)\big)dx\,ds.
$$
Elementary estimations show that $F_\eps(y)\le qG_\eps(y)$ 
%and $g_\eps(y)y\le G_\eps(y)$ 
for all $y\ge 0$, yielding
$$
  \frac{d}{dt}\int_\Omega G_\eps(|\vec n_\perp|^2) dx
	\le K(2q+1)\int_\Omega G_\eps(|\vec n_\perp|^2) dx, \quad
	K = \frac{D}{\eta\lambda_D^2}\|C\|_{L^\infty(\Omega)}.
$$
Then Gronwall's lemma and the assumption $|\vec n_\perp(0)|\in L^\infty(\Omega)$ give
for all $t>0$,
$$
  \int_\Omega G_\eps(|\vec n_\perp(t)|^2) dx 
	\le e^{K(2q+1)t}\int_\Omega G_\eps(|\vec n_\perp(0)|^2) dx
	\le e^{K(2q+1)t}\int_\Omega |\vec n_\perp(0)|^{2q} dx.
$$
By dominated convergence, we can pass to the limit $\eps\to 0$. Then, taking
the $2q$-th root, 
$$
  \|\vec n_\perp(t)\|_{L^{2q}(\Omega)} \le e^{2Kt}\|\vec n_\perp(0)\|_{L^{2q}(\Omega)},
	\quad t>0.
$$
The right-hand side is bounded uniformly in $q<\infty$. Therefore, the limit
$q\to\infty$ leads to
$$
   \|\vec n_\perp(t)\|_{L^{\infty}(\Omega)} 
	\le e^{2Kt}\|\vec n_\perp(0)\|_{L^{\infty}(\Omega)}, \quad t>0,
$$
which ends the proof.

%%%%%%%%%%%%%%%%%%%%%%%%%

\subsection{Proof of Theorem \ref{thm.ex2}}

Since for given $V$, system \eqref{1.n0}-\eqref{1.J0} is linear, 
we only need to prove the coercivity of the bilinear form \eqref{ex.a} 
in order to prove the existence of a weak solution to \eqref{ex.weak}. 
Compared to the proof of Theorem \ref{thm.ex}, the
quotient $D/\eta^2$ depends on the spatial variables and we cannot
employ the Poisson equation to estimate the form $a_V(w,w;t)$. Instead, we
use the Cauchy-Schwarz inequality to obtain for all $\eps>0$,
$$
  a_V(w,w;t) \ge \eps\|w\|_{H_0^1(\Omega)}^2 - K(\eps)\|w\|_{L^2(\Omega)}^2.
$$
The constant $K(\eps)>0$ depends on the $L^\infty$ norm of $\na V$.
Thus, the existence and uniqueness 
of a weak solution follows from Corollary 23.26 in \cite{Zei90}.
The nonnegativity of $n_\pm$ and hence of $n_0$ is shown as in the proof of 
Theorem \ref{thm.ex}.

The main difficulty of the proof of Theorem \ref{thm.ex2}
is to derive the $L^\infty$ bounds. To this end, 
we employ Lemma \ref{lem.bd} in the appendix, which is based on
the iterative technique of Alikakos \cite{Ali79},
modified by Kowalczyk \cite{Kow05}. 
First, we note that the entropy estimate from Proposition \ref{prop.ent1}
implies that $n_\pm\in L^\infty(0,T;L^1(\Omega))$. The proposition is proved for
special boundary data. However, the proof also shows that for
general boundary data, the entropy is bounded on each interval $(0,T)$. 

Next, let $k=\|n_+(0)\|_{L^\infty(\Omega)}+\|n_-(0)\|_{L^\infty(\Omega)}$.
$0<k<L$, $q\ge 1$, and set $[n_+]_L = \min\{n_+,L\}$. Then
$\phi(n_+) = (([n_+]_L-k)^+)^q$ is an admissibible test function in 
the weak formulation of \eqref{1.nplus}. Since $\phi(n_+(0))=0$ by construction
and 
$$
  \Phi(n_+) = \int_0^{n_+}\phi(y)dy \ge \frac{1}{q+1}(([n_+]_L-k)^+)^{q+1},
$$
it follows that
$$
  \int_0^t \langle \pa_t n_+,\phi(n_+)\rangle ds
	= \int_\Omega\big(\Phi(n_+(t))-\Phi(n_+(0))\big)dx
	\ge \frac{1}{q+1}\int_\Omega (([n_+(t)]_L-k)^+)^{q+1} dx.
$$
Therefore, setting $D_0(x)=D(x)(1+p(x))$, we infer from \eqref{1.nplus} that
\begin{align*}
  \frac{1}{q+1} & \int_\Omega (([n_+(t)]_L-k)^+)^{q+1} dx
	+ q\int_0^t\int_\Omega (([n_+]_L-k)^+)^{q-1}|\na n_+|^2 dx\,ds \\
	&= -q\int_0^t\int_\Omega D_0(x)(n_+ - k) (([n_+]_L-k)^+)^{q-1}
	\na V\cdot\na([n_+(t)]_L-k)^+ dx\,ds \\
	&\phantom{xx}{}- kq\int_0^t\int_\Omega D_0(x)(([n_+]_L-k)^+)^{q-1}
	\na V\cdot\na([n_+]_L-k)^+ dx\,ds \\
	&\phantom{xx}{}-\frac{1}{\tau}\int_0^t\int_\Omega (n_+-n_-)(([n_+]_L-k)^+)^{q}dx\,ds.
\end{align*}
Since $n_+-k=([n_+]_L-k)^+$ and $\na n_+=\na([n_+(t)]_L-k)^+$ in $\{k<n_+<L\}$,
this can be written as
\begin{align*}
  \int_\Omega & (([n_+(t)]_L-k)^+)^{q+1} dx
	+ \frac{4q}{q+1}\int_0^t\int_\Omega |\na(([n_+]_L-k)^+)^{(q+1)/2}|^2 dx\,ds \\
	&= -2q\int_0^t\int_\Omega D_0(x)(([n_+]_L-k)^+)^{(q+1)/2}
	\na V\cdot\na(([n_+]_L-k)^+)^{(q+1)/2} dx\,ds \\
	&\phantom{xx}{}- 2kq\int_0^t\int_\Omega D_0(x)(([n_+]_L-k)^+)^{(q-1)/2}
	\na V\cdot\na(([n_+]_L-k)^+)^{(q+1)/2} dx\,ds \\
	&\phantom{xx}{}-\frac{1}{\tau}\int_0^t\int_\Omega (n_+-n_-)(([n_+]_L-k)^+)^{q}dx\,ds.
\end{align*}
Employing the Cauchy-Schwarz inequality and absorbing the gradient terms by the
second integral on the left-hand side, we conclude that
\begin{align*}
  \int_\Omega & (([n_+(t)]_L-k)^+)^{q+1} dx
	+ \frac{2q}{q+1}\int_0^t\int_\Omega |\na(([n_+]_L-k)^+)^{(q+1)/2}|^2 dx\,ds \\
	&\le q(q+1)K_1\int_0^t\int_\Omega(([n_+]_L-k)^+)^{q+1}dx\,ds \\
	&\phantom{xx}{}+ q(q+1)k^2 K_1\int_0^t\int_\Omega(([n_+]_L-k)^+)^{q-1}dx\,ds \\
	&\phantom{xx}{}-\frac{1}{\tau}\int_0^t\int_\Omega (n_+-n_-)(([n_+]_L-k)^+)^{q}dx\,ds.
\end{align*}
where $K_1=\|D(1+p)\|_{L^\infty(\Omega)}^2\|\na V\|_{L^\infty(0,T;L^\infty(\Omega))}^2$.
We apply Young's inequality in the form $z^{q-1}\le ((q-1)/(q+1))z^{p+1} + 2/(q+1)$
to the second integral on the right-hand side to find that
\begin{align*}
  \int_\Omega & (([n_+(t)]_L-k)^+)^{q+1} dx
	+ \frac{2q}{q+1}\int_0^t\int_\Omega |\na(([n_+]_L-k)^+)^{(q+1)/2}|^2 dx\,ds \\
	&\le q(q+1)(1+k^2)K_1\int_0^t\int_\Omega(([n_+]_L-k)^+)^{q+1}dx\,ds 
  + 2qk^2 K_1 T\mbox{meas}(\Omega) \\
	&\phantom{xx}{}-\frac{1}{\tau}\int_0^t\int_\Omega (n_+-n_-)(([n_+]_L-k)^+)^{q}dx\,ds.
\end{align*}
A similar inequality can be derived by $n_-$. Adding both inequalities and
observing that
$$
  -\frac{1}{\tau}\int_0^t\int_\Omega (n_+-n_-)
	\big((([n_+]_L-k)^+)^{q} - (([n_+]_L-k)^+)^{q}\big)dx\,ds \le 0,
$$
it follows that
\begin{align*}
  \int_\Omega & \sum_{j=\pm}(([n_j(t)]_L-k)^+)^{q+1} dx
	+ \frac{2q}{q+1}\int_0^t\int_\Omega \sum_{j=\pm}
	|\na(([n_j]_L-k)^+)^{(q+1)/2}|^2 dx\,ds \\
	&\le q(q+1)K_2\int_0^t\int_\Omega\sum_{j=\pm}(([n_j]_L-k)^+)^{q+1}dx\,ds + qK_3.
\end{align*}
where $K_2=(1+k^2)K_1$ and $K_3=2k^2 K_1 T\mbox{meas}(\Omega)$.
By the Gronwall lemma, 
$$
  2^{-q}\bigg(\sum_{j=\pm}\|([n_j(t)]_L-k)^+\|_{L^{q+1}(\Omega)}\bigg)^{q+1}
	\le \sum_{j=\pm}\|([n_j(t)]_L-k)^+\|_{L^{q+1}(\Omega)}^{q+1} 
	\le qK_3 e^{K_2 q(q+1) t}.
$$
Since the right-hand side does not depend on $L>0$, we can perform the limit 
$L\to \infty$:
$$
  \|(n_+(t)-k)^+\|_{L^{q+1}(\Omega)} 
	+ \|(n_-(t)-k)^+\|_{L^{q+1}(\Omega)}
	\le 2(qK_3)^{1/(q+1)} e^{K_2 q t}.
$$
This shows that $n_+$, $n_-\in L^\infty(0,T;L^q(\Omega))$ for all $q<\infty$.
Unfortunately, we cannot perform the limit $q\to\infty$. In order to
derive an $L^\infty$ bound, we derive recursive inequalities. The idea
is to exploit the gradient term via the Gagliardo-Nirenberg inequality.

Since 
\begin{align*}
  \pa_t n_+ - \diver(D_0(x)\na n_+) 
	&= -\diver(D_0(x)n_+\na V)-\frac{n_+-n_-}{\tau} \\
  &\in L^\infty(0,T;(W^{1,q/(q-1)}(\Omega))')\quad\mbox{for all }q<\infty,
\end{align*}
maximal regularity \cite{Ama04} implies that $n_+\in L^q(0,T;W^{1,q}(\Omega))$.
This proves, together with the regularity $n_+\in L^\infty(0,T;L^q(\Omega))$ 
for all $q<\infty$, that $n_+^q\in L^2(0,T;H^1(\Omega))$. Hence $n_+^q - n_D^q
\in L^2(0,T;H_0^1(\Omega))$ is an admissible test function in \eqref{1.nplus}.
Estimating as above, we arrive at
\begin{align*}
  \frac{d}{dt}\int_\Omega & (n_+^{q+1}+n_-^{q+1})dx 
	+ \frac{2q}{q+1}\int_\Omega\big(|\na n_+^{(q+1)/2}|^2 + |\na n_-^{(q+1)/2}|^2\big)dx \\
	&\le q(q+1)K_2\int_\Omega (n_+^{q+1}+n_-^{q+1}) dx + qK_4,
\end{align*}
where $K_4>0$ depends on the boundary data $n_D$. Lemma \ref{lem.bd} in 
the appendix shows that $n_+(t)$, $n_-(t)$ are bounded in $L^\infty(\Omega)$
uniformly in $t>0$.
We infer that $n_0=n_++n_-$ and $\vec n\cdot\vec m=\frac12(n_+-n_-)$ are 
uniformly bounded in $L^\infty(\Omega)$. 

It remains to prove the boundedness of $\vec n$ which follows if we have shown
the boundedness of $\vec n_\perp$. An estimation as in Step 5 of the proof of
Theorem \ref{thm.ex} shows that $|\vec n_\perp|\in L^\infty(0,T;L^q(\Omega))$
for all $q<\infty$. Then the Alikakos iteration technique, applied to
\eqref{nperp}, shows that $|\vec n_\perp|\in 
L^\infty(0,T;L^\infty(\Omega))$.

%%%%%%%%%%%%%%%%%%%%%%%%%%%%%%%%%%%%%%%%%%%%%%%%%%%%%%%%%%%%%%%%%%%%%%%%%%%%%

\section{Monotonicity of the entropy}
%\section{Proof of Proposition \ref{prop.ent1}}
\label{sec.ent}

In this section, we prove Proposition \ref{prop.ent1} and formula \eqref{1.dHQdt}.

\begin{proof}[Proof of Proposition \ref{prop.ent1}]
The idea is to employ $(\log(n_+/\widetilde n_D)+V-V_D,\log(n_-/\widetilde n_D)+V-V_D)$ 
as a test function in \eqref{1.nplus}-\eqref{1.nminus}, where 
$\widetilde n_D=\frac12 n_D$. Since the logarithm may be
undefined, we need to regularize. We set
\begin{align*}
  H_\delta(t) &= \int_\Omega\left(h_\delta(n_+) + h_\delta(n_-) 
	+ \frac{\lambda_D^2}{2}|\na(V-V_D)|^2\right)dx, \quad \delta>0, \\
	h_\delta(n_\pm) &= \int^{n_\pm}_{\widetilde n_D}\big(\log(s+\delta)
	-\log(\widetilde n_D+\delta)\big)ds \\
	&= (n_\pm+\delta)(\log(n_\pm+\delta)-1) 
	- (\widetilde n_D+\delta)(\log(\widetilde n_D+\delta)-1).
\end{align*}
Then the pointwise convergence 
$h_\delta(n_\pm)\to n_\pm(\log n_\pm-1)-\widetilde n_D(\log \widetilde n_D-1)$ 
as $\delta\to 0$ holds.
Since $n_\pm=\frac12 n_D=\widetilde n_D$ on $\pa\Omega$ and
$-\lambda_D^2\pa_t\Delta V=\pa_t(n_+ + n_-)\in L^2(0,T;H^{-1}(\Omega))$,
we can differentiate $H_\delta$ to obtain
\begin{align*}
  \frac{dH_\delta}{dt}
	&= \langle \pa_t n_+,\log(n_++\delta)-\log(\widetilde n_D+\delta)\rangle
	+ \langle \pa_t n_-,\log(n_-+\delta)-\log(\widetilde n_D+\delta)\rangle \\
	&\phantom{xx}{}+ \lambda_D^2\langle\pa_t\na V,\na(V-V_D)\rangle,
\end{align*}
where $\langle\cdot,\cdot\rangle$ is the duality pairing between $H^{-1}(\Omega)$
and $H_0^1(\Omega)$. Observing that
$$
  \lambda_D^2\langle\pa_t\na V,\na(V-V_D)\rangle
	= -\lambda_D^2\langle\pa_t\Delta V,V-V_D\rangle
	= \langle\pa_t(n_++n_-),V-V_D)\rangle,
$$
it follows that
\begin{align*}
  \frac{dH_\delta}{dt}
	&= \langle\pa_t n_+,\log(n_++\delta)+V-\log(\widetilde n_D+\delta)-V_D\rangle \\
	&\phantom{xx}{}+ \langle\pa_t n_-,\log(n_-+\delta)+V-\log(\widetilde n_D+\delta)-V_D\rangle \\
	&= I_1 + I_2.
\end{align*}
After inserting the evolution equation \eqref{1.nplus} for $n_+$, setting
$J_+=D_+(\na n_++n_+\na V)$ with $D_\pm=D(1\pm p)$, 
and integrating by parts, we find that the first term $I_1$ equals
\begin{align*}
	I_1 &= 
	-\int_\Omega \big(J_+\cdot\na(\log(n_++\delta)+V) 
	- J_+\cdot\na(\log(\widetilde n_D+\delta)+V_D)\big)dx \\
	&\phantom{xx}{}- \int_\Omega\frac{n_+-n_-}{\tau}
	\big(\log(n_++\delta)+V-\log(\widetilde n_D+\delta)-V_D\big)dx.
\end{align*}
By Young's inequality, the first integral becomes
\begin{align*}
	\int_\Omega & \left(-\frac{J_+}{n_++\delta}\cdot(\na n_+ + (n_++\delta)\na V)
	+ J_+\cdot\na(\log(\widetilde n_D+\delta)+V_D)\right)dx \\
	&= \int_\Omega\left(-\frac{|J_+|^2}{D_+(n_++\delta)} 
	- \frac{\delta J_+\cdot\na V}{D_+(n_++\delta)}
	+ J_+\cdot\na(\log(\widetilde n_D+\delta)+V_D)\right)dx \\
	&\le -\int_\Omega\frac{|J_+|^2}{2D_+(n_++\delta)}dx
	+ \delta^2\int_\Omega\frac{|\na V|^2}{D_+(n_++\delta)} dx
	+ \int_\Omega(n_++\delta)|\na(\log(\widetilde n_D+\delta)+V_D)|^2 dx \\
	&\le -\int_\Omega\frac{|J_+|^2}{2D_+(n_++\delta)}dx
	+ \delta\int_\Omega\frac{|\na V|^2}{D_+} dx
	+ \int_\Omega(n_++\delta)|\na(\log(\widetilde n_D+\delta)+V_D)|^2 dx.
\end{align*}
The integral $I_2$ can be estimated in a similar way, eventually leading to
\begin{align*}
  \frac{dH_\delta}{dt}
	&\le -\int_\Omega\left(\frac{|J_+|^2}{2D_+(n_++\delta)}
	+\frac{|J_-|^2}{2D_-(n_-+\delta)}\right)dx
	+ \delta\int_\Omega\left(\frac{1}{D_+}+\frac{1}{D_-}\right)|\na V|^2 dx \\
	&\phantom{xx}{}
	+ \int_\Omega\big((n_++\delta)+(n_-+\delta)\big)
	|\na(\log(\widetilde n_D+\delta)+V_D)|^2 dx \\
	&\phantom{xx}{}
	- \int_\Omega\frac{n_+-n_-}{\tau}\big(\log(n_++\delta)-\log(n_-+\delta)\big)dx.
\end{align*}
The last integral is nonnegative since $x\mapsto\log(x+\delta)$ is an increasing
function. Therefore,
\begin{align}
  H_\delta(t) &\le H_\delta(0)
	+ \delta\int_0^t\int_\Omega \left(\frac{1}{D_+}+\frac{1}{D_-}\right)
	|\na V|^2 dx\,dt \nonumber \\ 
	&\phantom{xx}{}+\int_0^t\int_\Omega \big((n_++\delta)+(n_-+\delta)\big)
	|\na(\log(\widetilde n_D+\delta)+V_D)|^2 dx\,ds. 	\label{3.dHdt}
\end{align}
Since $\log\widetilde n_D+V_D=\mbox{const.}$ and $n_D\ge n_*>0$, 
the gradient term in the last integral can be estimated by
\begin{align*}
  |\na(\log(\widetilde n_D+\delta)+V_D)|^2 
	&= \big|\na(\log\widetilde n_D+V_D)+\na(\log(\widetilde n_D+\delta)
	-\log\widetilde n_D)\big|^2 \\
	&\le \frac{\delta^2|\na\widetilde n_D|^2}{\tfrac12 n_*(\tfrac12 n_*+\delta)},
\end{align*}
and the right-hand side is bounded in $L^\infty(\Omega)$ uniformly in $\delta$.

Therefore, the last two integrals in \eqref{3.dHdt} can be bounded from above
by a multiple of $H_\delta$. Then Gronwall's
lemma shows that $H_\delta$ is bounded uniformly in $\delta\in(0,1)$.
By dominated convergence, we can pass to the limit $\delta\to 0$ in the integrals,
leading to $H_\delta(t) \le H_0(0)$ for $t>0$, where we used that
$\na(\log \widetilde n_D+V_D)=0$. This finishes the proof.
\end{proof}

\begin{remark}\label{rem.HQ}\rm
We prove formula \eqref{1.dHQdt}. For this, we first observe that the formula
$\pa_t\exp(A)=\exp(A)\pa_t A$ for any matrix $A$ and the fact that the matrix 
logarithm of $N$ is defined by the matrix $A$ satisfying $\exp(A)=N$ imply that
$\pa_t \log N=N^{-1}\pa_t N$ holds. We assume that $\log(n_D/2)+V_D=c\in\R$ 
in $\Omega$ (thermal equilibrium). Then $\log N_D+V_D\sigma_0=c\sigma_0$ in $\Omega$.

Now, using the Poisson equation \eqref{1.V}, a formal computation leads to
\begin{align*}
  \frac{dH_Q}{dt} &= \int_\Omega \big(\tr[\pa_t N(\log N-\log N_D)]
	+ \lambda_D^2\pa_t\na V\cdot\na (V-V_D)\big) dx \\
	&= \int_\Omega \big(\tr[\pa_t N(\log N-\log N_D)]dx
	+ \pa_t(\tr(N)-C(x))(V-V_D)\big)dx \\
	&= \int_\Omega \tr\big[\pa_t N(\log N-\log N_D+(V-V_D)\sigma_0)\big]dx.
\end{align*}
Then, with the evolution equation \eqref{1.N} for $N$, an integration by parts, 
and the property $\log N_D+V_D\sigma_0=c\sigma_0$ in $\Omega$, we obtain 
\begin{align*}
  \frac{dH_Q}{dt} &= -\int_\Omega\sum_{j=1}^3
	D(x)\tr\big[P^{-1/2}(\pa_j N+N\pa_j V)P^{-1/2}\pa_j(\log N+V\sigma_0)\big]dx \\
	&\phantom{xx}{}
	- i\gamma\int_\Omega\tr\big[[N,\vec m\cdot\vec\sigma](\log N+V\sigma_0)\big]dx \\
	&\phantom{xx}{}
	+ \frac{1}{\tau}\int_\Omega\tr\left[\left(\frac12\tr(N)\sigma_0-N\right)
	(\log N+V\sigma_0)\right]dx \\
	&= I_1+I_2+I_3.
\end{align*}

We compute the three integrals term by term.
Employing the invariance of the trace under
cyclic permutations and the commutativity relation $\log(N)N=N\log(N)$ gives
$$
  \tr\big[[N,\vec m\cdot\vec\sigma]\log N\big]
	= \tr\big[\vec m\cdot\vec\sigma\log(N)N-\vec m\cdot\vec\sigma N\log(N)\big] = 0,
$$
which shows that $I_2=0$. For the integral $I_3$, the spectral property
$\tr[f(A)]=\sum_j f(\lambda_j)$ for any continuous function $f$ and any Hermitian
matrix $A$ with (real) eigenvalues $\lambda_j$, we have
\begin{align*}
  \tr\bigg[ & \left(\frac12\tr(N)\sigma_0-N\right)(\log N+V\sigma_0)\bigg]
	= \frac12\tr[N]\tr[\log N] - \tr[N\log N] \\
	&= \frac12n_0\left(\log\left(\frac12 n_0+|\vec n|\right)
	+\log\left(\frac12 n_0-|\vec n|\right)\right) 
	- \left(\frac12 n_0+|\vec n|\right)\log\left(\frac12 n_0+|\vec n|\right) \\
	&\phantom{xx}{}
	- \left(\frac12 n_0-|\vec n|\right)\log\left(\frac12 n_0-|\vec n|\right) \\
	&= -|\vec n|\left(\log\left(\frac12 n_0+|\vec n|\right)
	-\log\left(\frac12 n_0-|\vec n|\right)\right) \le 0.
\end{align*}
Hence, $I_3\le 0$.
Finally, for the integral $I_1$, we employ the property
$\pa_j N=N\pa_j\log N$ and the cyclicity of the trace:
\begin{align*}
  I_1 &= -\int_\Omega D(x)\sum_{j=1}^3	\tr\big[N(\pa_j\log N+\pa_j V\sigma_0)
	P^{-1/2}\pa_j(\log N+V\sigma_0)P^{-1/2}\big]dx \\
	&=  -\int_\Omega D(x)\sum_{j=1}^3\tr\big[N(\pa_j(\log N+V\sigma_0)P^{-1/2})^2\big]dx.
\end{align*}
This proves \eqref{1.dHQdt}.
\qed
\end{remark}

\begin{remark}\label{rem.neg}\rm
We compute the trace of the matrix $N(\pa_j(\log N+V\sigma_0)P^{-1/2})^2$.
The matrix $P^{-1/2}$ can be expanded into the Pauli basis. Indeed,
using $P^{-1}=(1-p^2)^{-1}(\sigma_0-p\vec m\cdot\vec\sigma)$ and Lemma 2.3 in
\cite{PoNe11}, we infer that
$$
  P^{-1/2}=p_+\sigma_0-p_-\vec m\cdot\vec\sigma, \quad
  p_\pm=\frac{1}{2\sqrt{1-p^2}}\big(\sqrt{1+p}\pm\sqrt{1-p}\big).
$$
The matrix $A:=\pa_j(\log N+V\sigma_0)=a_0\sigma_0+\vec a\cdot\vec\sigma$
can be also expanded. We observe that
$N^{-1}=\beta(\frac12n_0\sigma_0-\vec n\cdot\vec\sigma)$, where
$\beta=1/(\frac14n_0^2-|\vec n|^2)$.
By Formula (8) in \cite{PoNe11}, we can write
$$
  (b_0\sigma_0+\vec b\cdot\vec\sigma)(c_0\sigma_0+\vec c\cdot\vec\sigma)
	= (b_0c_0+\vec b\cdot\vec c)\sigma_0
	+ (b_0\vec c+c_0\vec b+i\vec b\times\vec c)\cdot\vec\sigma.
$$
With this identity, we find that
\begin{align*}
  \pa_j\log N 
	&= N^{-1}\pa_j N \\
	&= \beta\left(\frac14n_0\pa_j n_0-\vec n\cdot\pa_j\vec n\right)\sigma_0
	+ \frac{\beta}{2}(n_0\pa_j\vec n-\pa_jn_0\vec n)\cdot\vec\sigma
	- i\beta(\vec n\times\pa_j\vec n)\cdot\vec\sigma.
\end{align*}
Consequently,
$$
  a_0 = \frac{\beta}{2}\pa_j\left(\frac{n_0^2}{4}-|\vec n|^2\right) + \pa_j V,
	\quad \vec a = \frac{\beta}{2}(n_0\pa_j\vec n-\pa_jn_0\vec n)
	- i\beta(\vec n\times\pa_j\vec n).
$$
In particular, $\vec a$ is generally a complex vector. We write
$\vec a=\vec a_1+i\vec a_2$ with real vectors $\vec a_1$ and $\vec a_2$.

With these preparations, we calculate
\begin{align*}
  (AP^{-1/2})^2
	&= \big[(a_0p_+-p_-\vec a\cdot\vec m)\sigma_0
	+ (p_+\vec a-a_0p_-\vec m-ip_-(\vec a\times\vec m))\cdot\vec\sigma\big]^2 \\
	&= \big[(a_0p_+-p_-\vec a\cdot\vec m)^2
	+(p_+\vec a-a_0p_-\vec m-ip_-(\vec a\times\vec m))^2\big]\sigma_0 \\
	&\phantom{xx}{}+ 2(a_0p_+-p_-\vec a\cdot\vec m)
	(p_+\vec a-a_0p_-\vec m-ip_-(\vec a\times\vec m))\cdot\vec\sigma \\
	&= \big[(a_0p_+-p_-\vec a\cdot\vec m)^2 + (p_+\vec a-a_0p_-\vec m)^2 \\
	&\phantom{xx}{}- 2ip_-(p_+\vec a-a_0p_-\vec m)\cdot(\vec a\times\vec m)
	-p_-^2(\vec a\times\vec m)^2\big]\sigma_0 \\	
	&\phantom{xx}{}+ 2(a_0p_+-p_-\vec a\cdot\vec m)
	(p_+\vec a-a_0p_-\vec m-ip_-(\vec a\times\vec m))\cdot\vec\sigma.
\end{align*}
Since $\vec a\cdot(\vec a\times\vec m)=\vec m\cdot(\vec a\times\vec m)=0$,
the right-hand side simplifies slightly:
\begin{align*}
  (AP^{-1/2})^2
	&= \big[(a_0p_+-p_-\vec a\cdot\vec m)^2 + (p_+\vec a-a_0p_-\vec m)^2 
	- p_-^2(\vec a\times\vec m)^2\big]\sigma_0 \\	
	&\phantom{xx}{}+ 2(a_0p_+-p_-\vec a\cdot\vec m)
	(p_+\vec a-a_0p_-\vec m-ip_-(\vec a\times\vec m))\cdot\vec\sigma.
\end{align*}
After multiplication by $N=\frac12n_0\sigma_0+\vec n\cdot\vec\sigma$, it follows that
\begin{align*}
  N(AP^{-1/2})^2
	&= \tfrac12 n_0\big((a_0p_+-p_-\vec a\cdot\vec m)^2
	+ (p_+\vec a-a_0p_-\vec m)^2 - p_-^2(\vec a\times\vec m)^2\big)\sigma_0 \\
	&\phantom{xx}{}+ 2(a_0p_+-p_-\vec a\cdot\vec m)
	\vec n\cdot(p_+\vec a-a_0p_-\vec m-ip_-(\vec a\times\vec m))\sigma_0
	+ \vec B\cdot\vec\sigma,
\end{align*}
for some vector $\vec B$ which disappears after taking the trace (since $\sigma_j$ is
traceless):
\begin{align*}
  \mbox{tr}\big[N(AP^{-1/2})^2\big]
	&= n_0\big((a_0p_+-p_-\vec a\cdot\vec m)^2
	+ (p_+\vec a-a_0p_-\vec m)^2 - p_-^2(\vec a\times\vec m)^2\big) \\
	&\phantom{xx}{}+ 4(a_0p_+-p_-\vec a\cdot\vec m)
	\vec n\cdot(p_+\vec a-a_0p_-\vec m-ip_-(\vec a\times\vec m)). 
\end{align*}
Since $dH_Q/dt$ is real, the above trace is real too. Thus,
\begin{align*}
  \mbox{tr}\big[N(AP^{-1/2})^2\big]
	&= n_0(a_0p_+-p_-\vec a_1\cdot\vec m)^2 - n_0p_-^2(\vec a_2\cdot\vec m)^2
	+ n_0(p_+\vec a_1-a_0p_-\vec m)^2 \\
	&\phantom{xx}{}- n_0p_+^2\vec a_2^2 
	- n_0p_-^2(\vec a_1\times\vec m)^2 + n_0p_-^2(\vec a_2\times\vec m)^2 \\
	&\phantom{xx}{}+ 4(a_0p_+ - p_-\vec a_1\cdot\vec m)
	\vec n\cdot(p_+\vec a_1 - a_0p_-\vec m + p_-(\vec a_2\times\vec m)) \\
	&\phantom{xx}{}+ 4p_-(\vec a_2\cdot\vec m)
	\vec n\cdot(p_+\vec a_2 - p_-\vec a_1\times\vec m) \\
	&= J_1+J_2,
\end{align*}
where
\begin{align*}
  J_1 &= n_0(a_0p_+-p_-\vec a_1\cdot\vec m)^2
	+ n_0(p_+\vec a_1-a_0p_-\vec m)^2 \\
	&\phantom{xx}{}
	+ 4(a_0p_+ - p_-\vec a_1\cdot\vec m)\vec n\cdot(p_+\vec a_1 - a_0p_-\vec m), \\
	J_2 &= -n_0p_-^2(\vec a_2\cdot\vec m)^2 - n_0p_+^2\vec a_2^2 
	- n_0p_-^2(\vec a_1\times\vec m)^2 + n_0p_-^2(\vec a_2\times\vec m)^2 \\
	&\phantom{xx}{}+ 4p_+p_-a_0\vec n\cdot(\vec a_2\times\vec m)
	- 4p_-^2(\vec a_1\cdot\vec m)\vec n\cdot(\vec a_2\times\vec m) \\
	&\phantom{xx}{}+ 4p_+p_-(\vec a_2\cdot\vec m)\vec n\cdot\vec a_2
	- 4p_-^2(\vec a_2\cdot\vec m)\vec n\cdot(\vec a_1\times\vec m).
\end{align*}

The term $J_1$ is nonnegative. Indeed, set $c_0=a_0p_+-p_-\vec a_1\cdot\vec m$ and 
$\vec c=p_+\vec a_1-a_0p_-\vec m$. Then
$$
  J_1
	= n_0c_0^2 + n_0\vec c^2 + 4c_0\vec n\cdot\vec c 
	= n_0\left(c_0+\frac{2}{n_0}\vec n\cdot\vec c\right)^2
	+ n_0\left(\vec c^2 - \frac{4}{n_0^2}(\vec n\cdot\vec c)^2\right).
$$
Since $\frac12 n_0>|\vec n|$ by assumption, the Cauchy-Schwarz inequality shows that
the last term is nonnegative and thus, $J_1\ge 0$.

As $\vec n\cdot\vec a_2=0$, the last summand in $J_2$ vanishes. Because of
$\vec n\cdot(\vec a_1\times\vec m)=\vec m\cdot(\vec n\times\vec a_1)
=\frac12\beta n_0\vec m\cdot(\vec n\times\pa_j\vec n)
=-\frac12n_0(\vec a_2\times\vec m)$, the next to last summand in $J_2$ 
can be combined with the first summand, yielding
\begin{align*}
  J_2 &= n_0p_-^2(\vec a_2\cdot\vec m)^2 - n_0p_+^2\vec a_2^2 
	- n_0p_-^2(\vec a_1\times\vec m)^2 + n_0p_-^2(\vec a_2\times\vec m)^2 \\
	&\phantom{xx}{}+ 4p_+p_-a_0\vec n\cdot(\vec a_2\times\vec m)
	- 4p_-^2(\vec a_1\cdot\vec m)\vec n\cdot(\vec a_2\times\vec m).
\end{align*}
Employing $(\vec a_1\times\vec m)^2=\vec a_1^2-(\vec a_1\cdot\vec m)^2$,
the third summand can be reformulated, and we end up with
\begin{align*}
  J_2 &= n_0p_-^2(\vec a_2\cdot\vec m)^2 - n_0p_+^2\vec a_2^2 
	- n_0p_-^2\vec a_1^2 + 4p_+p_-a_0\vec n\cdot(\vec a_2\times\vec m) \\
	&\phantom{xx}{}+ n_0p_-^2\left(\vec a_1\cdot\vec m-\frac{2}{n_0}
	\vec n\cdot(\vec a_2\times\vec m)\right)^2
	+ n_0p_-^2\left((\vec a_2\times \vec m)^2 - \frac{4}{n_0^2}
	(\vec n\cdot(\vec a_2\times\vec m))^2\right).
\end{align*}
Arguing similarly as for $J_1$, we see that the last two summands are nonnegative.

Suppose that $\vec n$ is parallel to $\vec c$, $\vec a_2$
is parallel to $\vec m$, and $c_0=0$. Then, if $n_0\gg 1$ and $|\vec a_2|\gg 1$, 
we obtain $J_1\approx 0$ and $J_2<0$. Hence, $\mbox{tr}[N(AP^{-1/2})^2]$ may be 
negative.
\qed
\end{remark}

%%%%%%%%%%%%%%%%%%%%%%%%%%%%%%%%%%%%%%%%%%%%%%%%%%%%%%%%%%%%%%%%%%%%%%%%%%%%%

\section{Numerical simulations}\label{sec.numer}

In this section, we solve equations \eqref{1.V}-\eqref{1.bic} numerically
for a one-dimensional ballistic diode in a multilayer structure. 
The aim is twofold: First, we present simulations of the stationary state
in order to compare our numerical results to those
of \cite{PoNe11} and to detail the differences
between the numerical solutions with and without Poisson equation.
Second, we show numerically that the entropy \eqref{H0} decreases exponentially
fast in time.

The multilayer consists of a ferromagnetic layer sandwiched between 
two nonmagnetic layers. More precisely, let $\Omega=(0,L)$. 
The nonmagnetic layers $(0,\ell_1)$ and $(\ell_2,L)$
are characterized by $\vec m=0$, $p=0$ and a high doping concentration,
$C(x)=C_{\rm max}$ for $x\in(0,\ell_1)\cup(\ell_2,L)$, 
whereas the magnetic layer $(\ell_1,\ell_2)$
features a nonvanishing magnetization, $\vec m=(0,0,1)^\top$ and $p>0$, and a low
doping, $C(x)=C_{\rm min}$ for $x\in(\ell_1,\ell_2)$. 
This multilayer structure was numerically
solved in \cite{PoNe11} but for constant electric fields only and without doping.

The boundary conditions read as
$$
  n_0(0,t)=n_0(L,t)=C_{\rm max}, \quad \vec n(0,t)=\vec n(L,t)=0, \quad
	V(0,t)=0, \ V(L,t)=U
$$
for all $t>0$, and the initial conditions are $n_0(x)=1$, $\vec n(x)=0$, and
$V(x)=Ux/L$ for $x\in\Omega$ and for some $U>0$.
We choose $C_{\rm max}=10^{21}$\,$m^{-3}$ and 
$C_{\rm min}=0.4\cdot 10^{19}$\,$m^{-3}$. This corresponds to an overall low-doping
situation. With larger doping concentrations, we observed numerical problems
in the Poisson equation due to the smallness of the scaled Debye length
$\lambda_D$. This problem is well known and can be solved by employing
more sophisticated numerical methods.

We choose the same physical parameters as in \cite{PoNe11}:
the diffusion coefficient $D=10^{-3}\,\mbox{m}^2\mbox{s}^{-1}$,
the thermal voltage $V_{\rm th}=0.0259$\,V, the spin-flip relaxation time
$\tau=10^{-12}$\,s, the strength of the pseudo-exchange field
$\gamma=2\hbar/\tau$, and the applied potential $U=1\,$V.
Typically, the spin-flip relaxation time
is of the order of a few to 100 picoseconds \cite{FZGM08}.
The layers have the same length of $0.4$\,$\mu$m such that the device length
equals $L=1.2$\,$\mu$m. 

\subsection{Implementation}

The equations are discretized in time by the implicit Euler approximation
and in space by a node-centered finite-volume method. This method is conservative
and is able to deal with discontinuous coefficients.
We choose a uniform grid of $M$ points
$x_i=i\triangle x$ with step size $\triangle x>0$,
containing the interface points $\ell_1$ and $\ell_2$,
and uniform time steps $t_k=k\triangle t$ with step size $\triangle t>0$. 
We also introduce the cell-center points
$x_{i+1/2}=(i+1/2)\triangle x$. Denoting by $n_{\ell,i}^k$
an approximation of $(\triangle x)^{-1}\int_{x_{i-1/2}}^{x_{i+1/2}}n_\ell(x,t_k)dx$
for $\ell=0,1,2,3$ (the index of the spin component), 
the discretization of \eqref{1.n0}-\eqref{1.vecn} reads as follows:
\begin{align}
  & \frac{1}{\triangle t}(n_{0,i}^{k}-n_{0,i}^{k-1})
	+ \frac{1}{\triangle x}(j_{0,i+1/2}^k-j_{0,i-1/2}^k) = 0, \label{4.eq1} \\
	& \frac{1}{\triangle t}(n_{\ell,i}^{k}-n_{\ell,i}^{k-1})
	+ \frac{1}{\triangle x}(j_{\ell,i+1/2}^k-j_{\ell,i-1/2}^k)
	- \frac{2\gamma}{\hbar}(\vec n_i^k\times\vec m_i)_\ell 
	+ \frac{1}{\tau}n_{\ell,i}^k = 0, \label{4.eq2} 
\end{align}
where $\ell=1,2,3$, $\vec n_i^k=(n_{1,i}^k,n_{2,i}^k,n_{3,i}^k)$, 
$\vec m_i=(m_{1,i},m_{2,i},m_{3,i})=\vec m(x_i)$, and
\begin{align*}
  j_{0,i+1/2}^k &= -\frac{D}{\eta_{i+1/2}^2}\big(J_{0,i+1/2}^k
	-2p_{i+1/2}\vec J_{i+1/2}^k\cdot\vec m_{i+1/2}\big), \\
	j_{\ell,i+1/2}^k &= -\frac{D}{\eta_{i+1/2}^2}\bigg(\eta_{i+1/2}J_{\ell,i+1/2}^k
	+ (1-\eta_{i+1/2})(\vec J_{i+1/2}^k\cdot\vec m_{i+1/2})m_{\ell,i+1/2} \\
	&\phantom{xx}{}- \frac{p_{i+1/2}}{2}J_{0,i+1/2}^k m_{\ell,i+1/2}\bigg), \\
  J_{\ell,i+1/2}^k &= \frac{1}{\triangle x}\bigg((n_{\ell,i+1}^k-n_{\ell,i}^k)
	+ \frac12(n_{\ell,i+1}^k+n_{\ell,i}^k)(V_{i+1}^k-V_i^k)\bigg), \quad \ell=0,1,2,3.
\end{align*}
where $\vec J_{i+1/2}^k=(J_{\ell,i+1/2}^k)_{\ell=1,2,3}$.
The discrete electric potential $V_i^k$, 
which is scaled by the thermal voltage $V_{\rm th}$, is obtained from the
Poisson equation, discretized by central finite differences. The nonlinear discrete
system is solved by the Gummel method, i.e., given $(n_{\ell,i}^{k-1},V_i^{k-1})$
and setting $\rho_i=n_{0,i}^{k-1}\exp(-V_i^{k-1})$, we solve
first the nonlinear system
$$
  -\lambda_D^2(V_{i+1}^k - 2V_i^k + V_{i-1}^k) = \rho_i e^{V_i^k} - C(x_i),
$$
by Newton's method. Then, using the updated potential $(V_i^k)$, 
the discrete system \eqref{4.eq1}-\eqref{4.eq2} is solved for $n_{\ell,i}^{k}$, 
and the procedure is iterated. For details on the Gummel method, we refer to
\cite{Jer96}.
We have chosen 60 grid points in each layer (i.e.\ $M=180$) and the time step
size $\triangle t=0.005\tau$. These values are chosen in such a way
that further refinement did not change the results.

\subsection{Numerical simulations}

We wish to compare the stationary numerical solutions to those presented in 
\cite{PoNe11}. To this end, we compute the solution to the
above numerical scheme for ``large'' time until the steady state is approximately
reached. As in \cite{PoNe11}, we fix a linear potential (i.e.\ without
Poisson equation). The (scaled) stationary
charge density $n_0$ and spin-vector density $\vec n=(0,0,n_3)$ 
are presented in Figure \ref{fig.potlin}. According to \cite{PoNe11}, the
discontinuity of the spin polarization $p$ at the interfaces is compensated by
a decrease of $n_0$ in the ferromagnetic layer, leading to a non-equilibrium 
spin density $n_3\neq 0$. Our numerical results correspond to those
in \cite[Figure 1]{PoNe11} except for the values of the charge density 
$n_0$ in the right layer. Possanner and Negulescu found that $n_0$ is 
strictly smaller than the doping concentration and that there is a small 
boundary layer at $x=L$. 

{\small
\begin{figure}[ht]
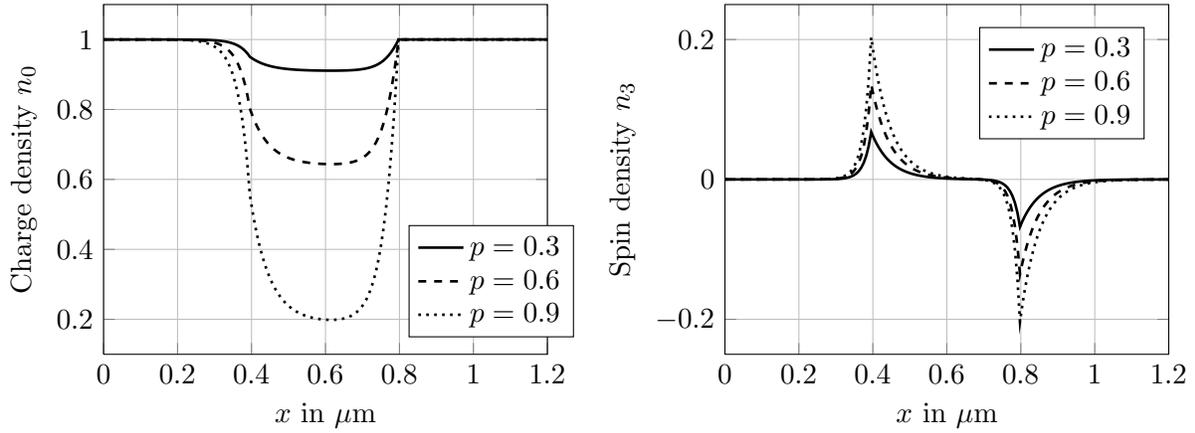

\input{n0_linpot.tikz}\hspace*{2mm}
\input{n3_linpot.tikz}
  \caption{Charge density $n_0$ (left) and spin density $n_3$ (right) in the 
	three-layer structure with linear potential and different spin polarizations $p$.}
  \label{fig.potlin}
\end{figure}
}

In order to understand this difference, we have
implemented the scheme of \cite{PoNe11}, which consists of a standard
Crank-Nicolson discretization in each of the layers, together with
continuity conditions for $n_0$, $\vec n$, $j_0$, and $\vec j=(j_1,j_2,j_3)$
at the interfaces $x=\ell_1$ and $x=\ell_2$. The numerical result for $n_0$
is shown in Figure \ref{fig.PoNe}. This does not correspond exactly to
the outcome of \cite{PoNe11} since our charge density is strictly {\em larger}
than the doping in the right layer. 
Refining the mesh, however, we see that
$n_0$ becomes closer to the doping and to the behavior
shown in Figure \ref{fig.potlin} (left). This supports the validity
of the numerical results computed from the finite-volume scheme. 

{\small
\begin{figure}[ht]
  {\centering\input{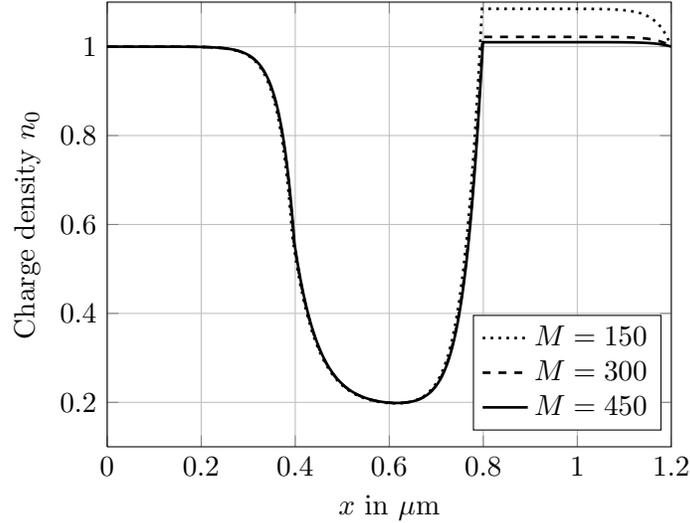}}
  \caption{Charge density $n_0$ computed from the scheme in \cite{PoNe11}
	(without Poisson equation) for different grid point numbers $M$.}
  \label{fig.PoNe}
\end{figure}
}

Figure \ref{fig.poispot} illustrates the densities $n_0$ and $n_3$ calculated
from the full model with self-consistent electric potential. 
Because of the rather small doping concentration and the large voltage,
electrons drift to the right contact, leading to a significant 
decrease of the charge density in the right layer.
For small values of the spin polarization $p$, the charge density is similar to the
charge density computed with vanishing magnetization. When the spin polarization
$p$ increases, the charge density is reduced in the ferromagnetic layer
and increases in the right nonmagnetic region. The self-consistent potential
leads to a slight reduction of the peaks of the spin density, compared with 
Figure \ref{fig.potlin} (right).

{\small
\begin{figure}[ht]
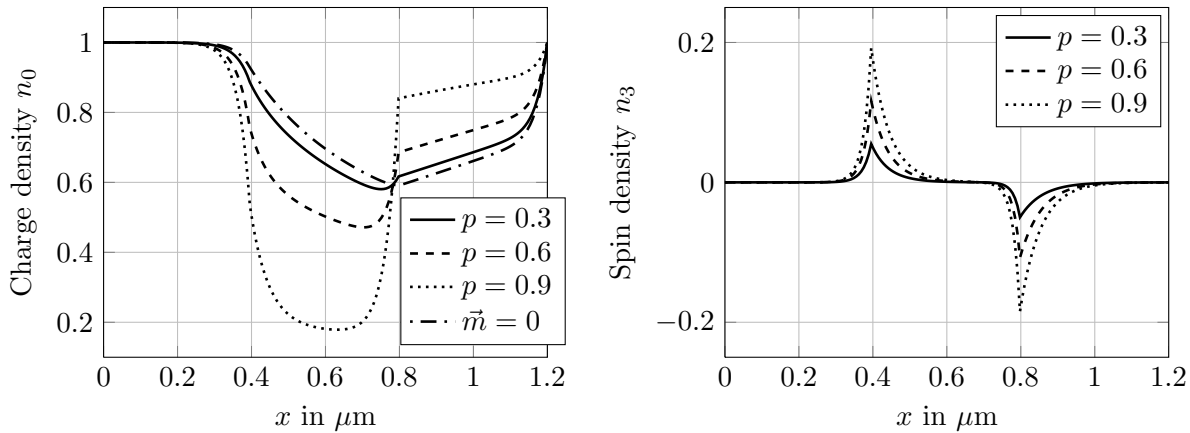

  \input{n0_poispot.tikz}\hspace*{2mm}
  \input{n3_poispot.tikz}
  \caption{Charge density $n_0$ (left) and spin density $n_3$ (right) 
	in the three-layer
	structure with self-consistent potential and different spin polarizations $p$.}
  \label{fig.poispot}
\end{figure}
}

Next, we consider a smaller device with length $L=0.4\,\mu$m and a higher level
of doping, $C_{\rm max}=9\cdot 10^{21}\,$m$^{-3}$. With these parameters,
the scaled Debye length is the same as in the previous case. As a consequence,
the charge density without magnetization does not change. The influence of the
spin polarization is similar as in the previous test case (see Figure
\ref{fig.small}), but the charge density is larger in the ferromagnetic layer
due to the reduced size.

{\small
\begin{figure}[ht]
  \input{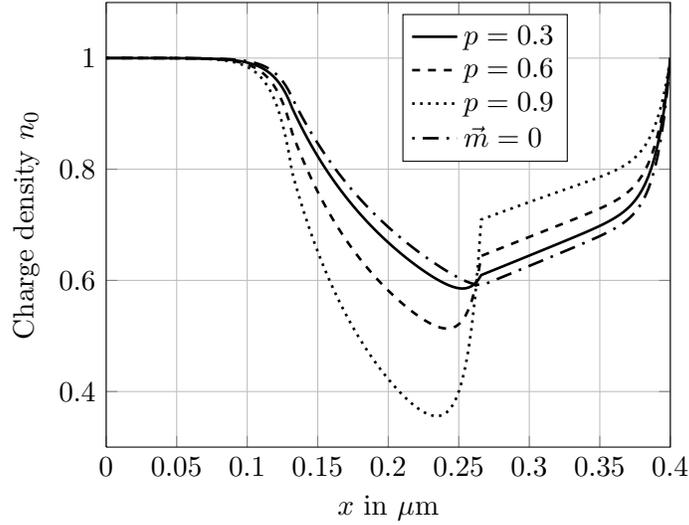} 
  \caption{Charge density $n_0$ in the three-layer
	structure with self-consistent potential and smaller device length $L=0.4\,\mu$m.}
  \label{fig.small}
\end{figure}
}

Our final example is concerned with a transient simulation. As initial data,
we choose $V=0$, $n_0=1$, and $\vec n=0$. 
Figure \ref{fig.entropy} presents the time decay of the entropy (free energy) $H_0$,
defined in \eqref{H0}. It turns out that $H_0(t)$ is decaying exponentially
fast. For times $t>270$\,ps, the equilibrium state is almost reached, and
since we reached the computer precision of about
$10^{-16}$, we observe some oscillations in the values for $H_0$.

{\small
\begin{figure}[ht]
  \input{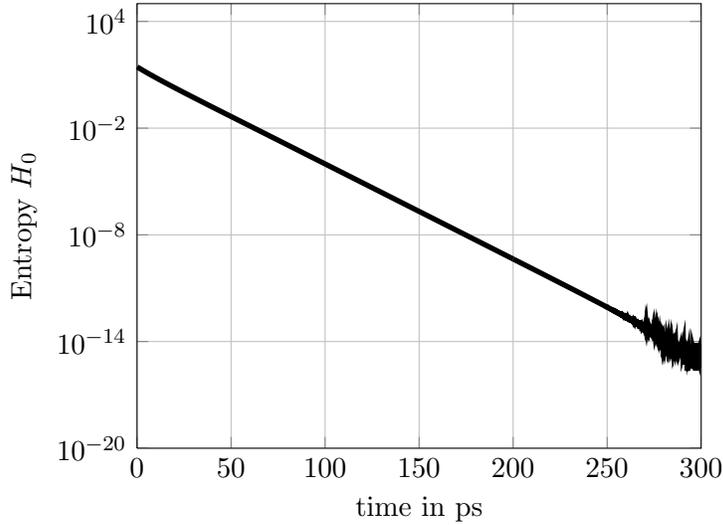}
  \caption{Semilogarithmic plot of the entropy $H_0(t)$ versus time.}
  \label{fig.entropy}
\end{figure}
}

%%%%%%%%%%%%%%%%%%%%%%%%%%%%%%%%%%%%%%%%%%%%%%%%%%%%%%%%%%%%%%%%%%%%%%%%%%%%%

\begin{appendix}
\section{A boundedness result}

We prove the following lemma which is an extension of a result due
to Kowalczyk \cite{Kow05}, based on the iteration technique of Alikakos \cite{Ali79}.

\begin{lemma}\label{lem.bd}
Let $\Omega\subset\R^d$ ($d\ge 1$) be a bounded domain and
let $u_i\ge 0$, $u_i^{q/2}\in L^2(0,T;H^1(\Omega))$ for all $2\le q<\infty$,
$i=1,\ldots,n$. Suppose that there exist constants $K_0$, $K_1$, $K_2$ such that
for all $q\ge 2$,
\begin{equation}\label{a.ineq}
  \frac{d}{dt} \sum_{i=1}^n\int_\Omega u_i^{q+1} dx 
	+ K_0\sum_{i=1}^n\int_\Omega |\na u_i^{(q+1)/2}|^2 dx
	\le q(q+1)K_1\sum_{i=1}^n\int_\Omega u_i^{q+1} dx + q(q+1)K_2.
\end{equation}
Then for all $0\le t\le T$,
$$
  \sum_{i=1}^n\|u_i(t)\|_{L^\infty(\Omega)}
	\le K^*
	\max\left\{1,\sum_{i=1}^n\|u_i\|_{L^\infty(0,T;L^1(\Omega))},
	n\sum_{i=1}^n\|u_i(0)\|_{L^\infty(\Omega)}\right\},
$$
where $K^*=2^{7+2d}(1+\max\{K_1,K_2\})$.
\end{lemma}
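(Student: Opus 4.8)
The plan is to run an Alikakos--Moser iteration over the exponents $q_k = 2^{k+1}-1$, $k\ge 0$, controlling the $L^{q_k+1}$ norms of $\sum_i u_i$ along a geometric sequence and passing to the limit $k\to\infty$. First I would fix $q=q_k$ in the differential inequality \eqref{a.ineq} and observe that the gradient term on the left, combined with the Gagliardo--Nirenberg inequality in the form
\[
  \|v\|_{L^2(\Omega)}^2 \le C_{\rm GN}\|\na v\|_{L^2(\Omega)}^{2\theta}\|v\|_{L^1(\Omega)}^{2(1-\theta)} + C_{\rm GN}\|v\|_{L^1(\Omega)}^2,
  \qquad \theta = \frac{d}{d+2},
\]
applied to $v=u_i^{(q+1)/2}$, lets me dominate $\sum_i\int_\Omega u_i^{q+1}\,dx$ (up to a small multiple of the gradient term and a lower-order term involving $\|u_i^{(q+1)/2}\|_{L^1}=\|u_i\|_{L^{(q+1)/2}}^{(q+1)/2}$, i.e.\ the previous step's norm). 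Substituting this back into \eqref{a.ineq} and absorbing produces a closed inequality of the shape
\[
  \frac{d}{dt}y_k(t) + c\,q_k^{-\alpha} y_k(t)^{1+\beta} \le C q_k^{\gamma}\bigl(1 + M_{k-1}^{\delta}\bigr),
\]
where $y_k(t)=\sum_i\|u_i(t)\|_{L^{q_k+1}}^{q_k+1}$, $M_{k-1}=\sup_{[0,T]} y_{k-1}^{1/(q_{k-1}+1)}$ (with the convention $M_{-1}$ built from the $L^1$ bound), and the exponents $\alpha,\beta,\gamma,\delta$ depend only on $d$; here $q_k+1 = 2(q_{k-1}+1)$ is the doubling that makes the iteration work.

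Next I would invoke the elementary ODE comparison lemma: if $y'+a y^{1+\beta}\le b$ on $[0,T]$ with $a,b>0$, then $\sup_{[0,T]} y \le \max\{y(0),\ (b/a)^{1/(1+\beta)}\}$. Applying this with $a = cq_k^{-\alpha}$, $b = Cq_k^{\gamma}(1+M_{k-1}^{\delta})$ and $y_k(0)=\sum_i\|u_i(0)\|_{L^{q_k+1}}^{q_k+1}\le (\sum_i\|u_i(0)\|_{L^\infty})^{q_k+1}|\Omega|$ gives
\[
  M_k \le \max\Bigl\{ \bigl(\textstyle\sum_i\|u_i(0)\|_{L^\infty}\bigr)|\Omega|^{1/(q_k+1)},\ \bigl(C' q_k^{\alpha+\gamma}(1+M_{k-1}^{\delta})\bigr)^{1/(q_k+1)}\Bigr\}.
\]
Taking logarithms, writing $m_k=\log^+ M_k$, and using $q_k+1=2^{k+1}$, this becomes a recursion $m_k \le \max\{\,\cdot\,,\ 2^{-(k+1)}(A + (k+1)B) + \delta 2^{-(k+1)} m_{k-1}\}$ with $A,B$ depending on $C',\alpha,\gamma$; since $\sum_k 2^{-(k+1)}(A+(k+1)B)$ converges and the multiplicative factors $\delta 2^{-(k+1)}$ are summable (indeed their product over $k$ is finite), the sequence $m_k$ stays bounded by a constant times $\max\{1, m_{-1}, \log^+(\sum_i\|u_i(0)\|_{L^\infty})\}$, which after exponentiating and absorbing the factor $n$ (coming from $\sum_i\|u_i\|_\infty \le n\max_i\|u_i\|_\infty$ together with the $L^1$-normalisation) yields the claimed bound with $K^*=2^{7+2d}(1+\max\{K_1,K_2\})$. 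Finally, $\|u_i(t)\|_{L^\infty(\Omega)} = \lim_{q\to\infty}\|u_i(t)\|_{L^q(\Omega)}$ for a.e.\ $t$, and the uniform-in-$k$ bound on $M_k$ transfers to an $L^\infty$ bound; one checks the constant is independent of $t$ because every estimate above was uniform on $[0,T]$.

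The main obstacle is bookkeeping the constants so that the final $K^*$ has exactly the stated form $2^{7+2d}(1+\max\{K_1,K_2\})$: one must verify that the polynomial-in-$q_k$ factors $q_k^{\alpha+\gamma}$ contribute only a bounded amount after the $2^{-(k+1)}$-th root and summation (this is the classical point where $\sum_k (k+1)2^{-(k+1)} = 2$ is used), and that the constant $C_{\rm GN}$ from Gagliardo--Nirenberg, the absorption constants, and the dimensional exponents $\alpha=1$ (roughly), $\beta = 2/(d\cdot\text{something})$, etc., all collapse into the single prefactor $1+\max\{K_1,K_2\}$ with a clean power of $2$. Kowalczyk's argument in \cite{Kow05} essentially does this for $n=1$; the only genuinely new ingredient here is carrying the sum over $i=1,\dots,n$ through every step, which is harmless because \eqref{a.ineq} is already stated for the summed quantity and the Gagliardo--Nirenberg step is applied termwise before summing.
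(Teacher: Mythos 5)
Your overall strategy is the same as the paper's: the paper also runs the dyadic Alikakos--Moser iteration (with exponents $\lambda_m=2^m-1$), uses Gagliardo--Nirenberg plus Young to trade $\sum_i\int_\Omega u_i^{q+1}dx$ for a small multiple of the gradient term and the square of the previous level's $L^1$ quantity, and then hands the resulting recursive family of differential inequalities \eqref{a.aux} to Kowalczyk's Lemma 5.1 (adjusted for the sum over $i$, which is what produces the factor $n$ in the initial-data term). So in outline your proposal is correct and matches the paper.

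Two pieces of your bookkeeping do not match what the absorption actually produces, and one of them would break the argument if taken literally. First, the mechanism you describe (dominate $\int_\Omega u_i^{q+1}dx$ by $\eps\|\na u_i^{(q+1)/2}\|_{L^2}^2$ plus a constant times $\|u_i^{(q+1)/2}\|_{L^1}^2$, then absorb into \eqref{a.ineq}) yields a \emph{linear} damping term $\eps_m\sum_i\int_\Omega u_i^{q+1}dx$ on the left, as in \eqref{a.aux}, not the superlinear term $c\,q_k^{-\alpha}y_k^{1+\beta}$ you wrote; the superlinear form would require using Gagliardo--Nirenberg in the reverse direction to bound the gradient from below. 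Either form can be made to work, but the one you state is not the one your derivation gives. Second, and more seriously: in your logarithmic recursion the coefficient of $m_{k-1}$ must be essentially $1$, not a summable sequence $\delta\,2^{-(k+1)}$ with $\delta$ fixed. The right-hand side of the level-$k$ inequality contains $\bigl(\sup_t\sum_i\int_\Omega u_i^{q_{k-1}+1}dx\bigr)^2=M_{k-1}^{2(q_{k-1}+1)}=M_{k-1}^{q_k+1}$, so after taking the $(q_k+1)$-th root the exponent on $M_{k-1}$ is exactly $1$; in your notation $\delta=q_k+1$ and $\delta\,2^{-(k+1)}=1$. If the coefficients of $m_{k-1}$ really formed a summable (hence vanishing) sequence, their product would tend to zero and the limiting bound would lose all dependence on the data, contradicting the statement of the lemma. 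What must converge is the additive series $\sum_k 2^{-(k+1)}\bigl(A+(k+1)B\bigr)$, as you correctly note at the end.
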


\begin{proof}
Note that the proof does not follow directly from the Gronwall lemma and the limit
$q\to \infty$ because of the quadratic term in $q$ on the right-hand side.
First, we estimate the $L^{q+1}$ norm of $u_i$. 
By the Gagliardo-Nirenberg inequality with 
$\theta=d/(d+2)<1$ and the Young inequality 
$ab\le \theta\eps a^{1/\theta} + (1-\theta)\eps^{-\theta/(1-\theta)}b^{1/(1-\theta)}$
for $0<\eps<1$, it follows that
\begin{align*}
  \int_\Omega u_i^{q+1} dx &= \|u_i^{(q+1)/2}\|_{L^2(\Omega)}^2
	\le K_3\|u_i^{(q+1)/2}\|_{H^1(\Omega)}^{2\theta}
	\|u_i^{(q+1)/2}\|_{L^1(\Omega)}^{2(1-\theta)} \\
	&\le \eps\big(\|\na u_i^{(q+1)/2}\|_{L^2(\Omega)}^2
	+ \|u_i^{(q+1)/2}\|_{L^2(\Omega)}^2\big)
	+ \frac{K_3^{1/(1-\theta)}}{\eps^{\theta/(1-\theta)}}
	\|u_i^{(q+1)/2}\|_{L^1(\Omega)}^2.
\end{align*}
Since $1/(1-\theta)=1+d/2$ and $\theta/(1-\theta)=d/2$, this gives
$$
  \int_\Omega u_i^{q+1} dx 
	\le \frac{\eps}{1-\eps}\|\na u_i^{(q+1)/2}\|_{L^2(\Omega)}^2
	+ \frac{K_3^{1+d/2}}{\eps^{d/2}(1-\eps)}\|u_i^{(q+1)/2}\|_{L^1(\Omega)}^2.
$$
We choose $\eps>0$ such that $(\eps/(1-\eps))(q(q+1)K_1+(q+1)^{-1})=K_0$
(then $\eps=O(q^{-2})$ as $q\to\infty$).
Hence, there exists a constant $K_4>0$ which is independent of $q$ such that
$K_3^{1+d/2}/(\eps^{d/2}(1-\eps))\le K_4(q+1)^d$.
Multiplying the above inequality by $q(q+1)K_1+(q+1)^{-1}$, we conclude that
\begin{align*}
  \big(q & (q+1) K_1+(q+1)^{-1}\big)\int_\Omega u_i^{q+1} dx \\
	&\le K_0\|\na u_i^{(q+1)/2}\|_{L^2(\Omega)}^2 
	+ K_4(q+1)^d\big(q(q+1)K_1+(q+1)^{-1}\big)\|u_i^{(q+1)/2}\|_{L^1(\Omega)}^2.
\end{align*}
We insert this estimate in \eqref{a.ineq}:
\begin{align*}
  \frac{d}{dt} & \sum_{i=1}^n \int_\Omega u_i^{q+1} dx 
	+ \frac{1}{q+1} \sum_{i=1}^n\int_\Omega u_i^{q+1} dx \\
	&\le K_4(q+1)^d\big(q(q+1)K_1+(q+1)^{-1}\big)
	\sum_{i=1}^n\|u_i^{(q+1)/2}\|_{L^1(\Omega)}^2 + q(q+1)K_2.
\end{align*}
Setting $q=\lambda_m=2^m-1$ for $m\in\N$, $\eps_m=2^{-m}$, 
$a_m=2^m(2^m-1)\max\{K_1,K_2\}$, and
$c_m=2^{dm}K_4$, the above inequality can be written as
\begin{equation}\label{a.aux}
  \frac{d}{dt}\sum_{i=1}^n\int_\Omega u_i^{\lambda_m+1}dx
	+ \eps_m\sum_{i=1}^n \int_\Omega u_i^{\lambda_m+1}dx
	\le (a_m+\eps_m)c_m\left(\sum_{i=1}^n
	\sup_{t>0}\int_\Omega u_i^{\lambda_{m-1}+1}dx\right)^2 + a_m.
\end{equation}
If $n=1$, the result follows directly from Lemma 5.1 in \cite{Kow05}.
An inspection of the proof of Lemma 5.1 shows that the conclusion still holds
with a slightly different $L^\infty$ bound which comes from the fact that
\begin{align*}
  \sum_{i=1}^n\int_\Omega u_i(0)^{\lambda_k+1}dx
	&\le n^{\lambda_k}\left(\sum_{i=1}^n\|u_i(0)\|_{L^{\lambda_k+1}(\Omega)}
	\right)^{\lambda_k+1} \\
	&\le \mbox{meas}(\Omega)\left(n\sum_{i=1}^n\|u_i(0)\|_{L^\infty(\Omega)}
	\right)^{\lambda_k+1}.
\end{align*}
This ends the proof.
\end{proof}
\end{appendix}

%%%%%%%%%%%%%%%%%%%%%%%%%%%%%%%%%%%%%%%%%%%%%%%%%%%%%%%%%%%%%%%%%%%%%%%%%%%%%

\end{document}